\documentclass[a4paper,10pt]{article}
\usepackage[active]{srcltx}
\usepackage{hyperref}
\usepackage[normalem]{ulem}

\usepackage{graphicx}
\usepackage{amssymb}
\usepackage{epstopdf}
\usepackage{amsmath}
\usepackage{amsthm}
\usepackage{amssymb}
\usepackage{latexsym}
\usepackage{mathtools}
\usepackage{mathrsfs}
\usepackage{graphics}
\usepackage{latexsym}
\usepackage{psfrag}
\usepackage{import}
\usepackage{verbatim}
\usepackage{enumerate}
\usepackage{enumitem}
\usepackage{graphicx}
\usepackage[usenames]{color}
\usepackage{cleveref}
\usepackage{orcidlink}

\newcommand{\R}{\mathbb{R}}

\newcommand{\supp}{\text{\rm supp}}

\newcommand{\Id}{\mathbb{I}}

\newcommand{\ve}{\varepsilon}

\newcommand{\T}{\mathcal{T}}

\renewcommand{\L}{\mathcal{L}}

\newcommand{\CD}{\mathsf{CD}}

\newcommand{\Geo}{{\rm Geo}}
\newcommand{\TGeo}{{\rm TGeo}}
\newcommand{\MCP}{\mathsf{MCP}}

\newcommand{\Ent}{{\rm Ent}}

\newcommand{\Dom}{{\rm Dom}}

\newcommand{\fa}{\mathfrak{a}}
\newcommand{\fb}{\mathfrak{b}}

\newcommand{\di}{\mathrm{d}}

\newcommand{\Prob}{\mathcal P}

\newcommand{\Ric}{{\rm Ric}}

\renewcommand{\L}{\mathcal{L}}

\newcommand{\vol}{\mathrm{Vol}}

\newcommand{\TMCP}{\mathsf{TMCP}}
\newcommand{\TCD}{\mathsf{TCD}}

\newcommand{\mm}{\mathfrak m}
\newcommand{\qq}{\mathfrak q}

\newcommand{\ee}{{\rm e}}
\newcommand{\QQ}{\mathfrak Q}

\newcommand{\sfd}{\mathsf d}

\theoremstyle{plain}
\newtheorem{lemma}{Lemma}[section]
\newtheorem{theorem}[lemma]{Theorem}

\newtheorem{proposition}[lemma]{Proposition}

\newtheorem*{theorem*}{Theorem}
\newtheorem*{maintheorem*}{Main Theorem}

\theoremstyle{definition}

\newtheorem{definition}[lemma]{Definition}
\newtheorem*{definition*}{Definition}
\newtheorem{remark}[lemma]{Remark}

\numberwithin{equation}{section}

\providecommand{\keywords}[1]{\small{\textit{Keywords and phrases:} #1}}

\providecommand{\subjclass}[1]{\small{\textit{2020 Mathematics Subject Classification:} #1}}

\title{A singularity theorem in terms of asymptotic  expansion}
\author{Fabio Cavalletti\orcidlink{0000-0003-2494-9029}\thanks{F. Cavalletti: Department of Mathematics, University of Milano (IT), email:fabio.cavalletti@unimi.it.} \ and Andrea Mondino\orcidlink{0000-0002-1932-7148}\thanks{A. Mondino: Mathematical Institute,  University of Oxford  (UK), email: Andrea.Mondino@maths.oxford.ac.uk.}}

\date{\today}     

\begin{document}
\maketitle

\begin{abstract}
We prove a sharp singularity theorem in which the classical focusing
hypothesis of Hawking--Penrose theory is replaced by a condition on the
asymptotic volume growth of timelike geodesics. More precisely, let
$(M,g)$ be a smooth, globally hyperbolic Lorentzian manifold satisfying
the strong energy condition, and let $V\subset M$ be a smooth, compact
Cauchy hypersurface. We associate to the future-directed timelike
geodesics orthogonal to $V$ asymptotic volume-expansion invariants
$\theta_\alpha$, and show that a uniform lower bound
$\theta_\alpha\ge c>0$ forces a sharp upper bound on the time separation
between $V$ and its chronological past. In particular, $(M,g)$ is past
timelike geodesically incomplete.

The estimate is sharp and equality is attained by Lorentzian cones.
Moreover, in the smooth setting we prove a rigidity theorem showing that
equality characterizes these cone models.

The argument extends to globally hyperbolic Lorentzian length spaces
satisfying the synthetic strong energy condition in the form of the
Timelike Measure Contraction Property, $\TMCP^e(0,N)$. In this non-smooth setting we also
obtain corresponding raywise and measure-rigidity statements.
\end{abstract}

\subjclass{83C75; 53C50; 49Q22; 53C23;  83C05.}\smallskip

\keywords{Singularity theorems, strong energy condition, asymptotic expansion, Lorentzian pre-length spaces, optimal transport.}



\bibliographystyle{plain}

\tableofcontents

\section{Introduction}

The singularity theorems of Penrose~\cite{Penrose65} and
Hawking~\cite{Haw:67} are among the foundational achievements of
mathematical relativity. They demonstrate that spacetime singularities
--- understood as causal geodesic incompleteness --- are not merely
artifacts of highly symmetric solutions of Einstein's equations, but arise
under broad and physically natural assumptions. Penrose's theorem predicts
singularity formation in gravitational collapse, whereas Hawking's theorem
applies to cosmological spacetimes undergoing expansion. Together with
their many subsequent extensions and refinements, these results have
profoundly shaped our understanding of black holes, cosmology, and the
global structure of spacetime.

The underlying mechanism in both theorems is the combination of an energy
condition with the focusing of causal geodesics. Concretely, one assumes
that the spacetime is globally hyperbolic and that the Ricci curvature is
nonnegative along timelike directions in Hawking's theorem and along null
directions in Penrose's theorem. This is supplemented by a local geometric
condition inducing the focusing of causal geodesics, typically formulated
in terms of a mean-curvature bound on a spacelike hypersurface or the
existence of trapped surfaces. Under these hypotheses, one concludes that
the spacetime contains an incomplete causal geodesic. In the classical
framework of general relativity, such geodesic incompleteness is regarded
as the hallmark of a spacetime singularity.

In this paper we establish a singularity theorem of a different nature.
Rather than imposing a local focusing condition on a Cauchy hypersurface,
we assume a lower bound on the \emph{asymptotic volume expansion} of the
timelike geodesic congruence generated by it. Thus the mechanism leading
to incompleteness is encoded in the large-time behaviour of the congruence,
rather than in its infinitesimal behaviour at the initial hypersurface.

A second feature of the result is that its natural framework is the
non-smooth theory of Lorentzian length spaces \cite{KS}. More precisely, the
main theorem is proved under the synthetic strong energy condition $\mathsf{TMCP}^{e}(0,N)$ introduced by the authors in \cite{CaMo:20}, which is rooted in the optimal transport characterizations of timelike Ricci curvature lower bounds established in the smooth setting by McCann \cite{McCann} and by Suhr and the second author \cite{MoSu}.
Recall that $\TMCP^e(0,N)$  is strictly weaker than requiring a
timelike curvature-dimension condition $\TCD^e_p(0,N)$ and is the Lorentzian counterpart
of the classical $\MCP(0,N)$ condition in metric geometry \cite{Ohta1, sturm:II}. The relevance of
this weakening is twofold. On the one hand, the proof of the sharp
singularity estimate ultimately relies only on a one-dimensional
$\MCP(0,N)$ comparison. On the other hand, $\TMCP^e(0,N)$ is satisfied by
important singular Lorentzian models, including generalized Lorentzian
cones \cite{CalistiKettererSaemann}.
\smallskip

For the sake of exposition, let us first describe the result in the smooth
setting. Let $(M,g)$ be an $(n+1)$-dimensional, globally hyperbolic
Lorentzian manifold satisfying the Hawking--Penrose strong energy
condition
\begin{equation}\label{eq:SEC-Intro}
\Ric_g(v,v)\ge 0,
\qquad
\text{for every timelike vector $v\in TM$}.
\end{equation}
Let $V\subset M$ be a smooth, compact Cauchy hypersurface and denote by
$I^\pm(V)$ its chronological future and past. The signed time-separation
from $V$ is defined by
\begin{equation}\label{eq:deftauV-intro}
\tau_V(x):=
\begin{cases}
\sup_{y\in V}\tau(y,x), & x\in I^+(V),\\
-\sup_{y\in V}\tau(x,y), & x\in I^-(V),\\
0, & \text{otherwise}.
\end{cases}
\end{equation}
Away from the cut locus, the integral curves of $-\nabla\tau_V$ are
unit-speed timelike geodesics orthogonal to $V$. More generally, the
localization theory for the time-separation function provides, up to a set
of vanishing volume measure, a decomposition of $I^+(V)\cup I^-(V)$ into
timelike transport rays $X_\alpha$. Parametrizing each ray by
$t=\tau_V$ and imposing $X_\alpha(0)\in V$, the spacetime volume
disintegrates in the smooth case as
\begin{equation}\label{eq:DisintVolvSmooth-Intro}
\vol_g\llcorner_{I^\pm(V)}
=
\int_V \mathfrak m_\alpha\,\vol_{g_V}(\mathrm d\alpha),
\end{equation}
where
\begin{equation}\label{eq:malpha-Intro}
\mathfrak m_\alpha
=
h_\alpha(t)\,\mathcal L^1,
\qquad
h_\alpha(0)=1
\quad
\text{for $\vol_{g_V}$-a.e.\ $\alpha\in V$}.
\end{equation}
The density $h_\alpha$ measures the infinitesimal transverse volume
distortion along the ray $X_\alpha$.

The strong energy condition implies a Bishop--Gromov-type monotonicity
along each ray. In particular,
\[
t\longmapsto
\frac{h_\alpha(t)}{(n+1)t^n}
\]
is non-increasing for $t>0$. This motivates the definition of the
one-dimensional asymptotic area ratio
\begin{equation}\label{eq:defthetaalpha-intro}
\theta_\alpha
:=
\lim_{t\to+\infty}
\frac{h_\alpha(t)}{(n+1)t^n}.
\end{equation}
The quantity $\theta_\alpha$ records the asymptotic transverse volume
expansion along the future-directed ray $X_\alpha$. Our main observation is
that positive asymptotic expansion in the future imposes a sharp upper
bound on how far the same ray can extend into the past.

\subsubsection*{The main result}

The one-dimensional core of the argument is particularly simple. In \Cref{lem:MCP-asymptotic-bound}, we prove that if
$(I,|\cdot|,h\,\mathcal L^1)$ satisfies $\MCP(0,N)$, if
$\sup I=+\infty$, and if
\[
    \theta
    :=
    \lim_{R\to+\infty}\frac{h(R)}{N R^{N-1}}>0,
\]
then
\[
    I\subset
    \left(
    -\left(\frac{h(0)}{N\theta}\right)^{\frac1{N-1}},
    +\infty
    \right).
\]

Combining this estimate with the localization of $\TMCP^e(0,N)$ along the
transport rays of $\tau_V$ gives the following theorem (stated here for simplicity in the smooth setting, for the more general synthetic statement see \Cref{Thm:Secondsingular}).

\begin{theorem}[\Cref{Thm:SecondsingularSmooth}]
Let $(M,g)$ be an $(n+1)$-dimensional, globally hyperbolic, smooth Lorentzian manifold satisfying Penrose-Hawking's strong energy condition \eqref{eq:SEC-Intro}. Let $V\subset M$ be a smooth, compact, Cauchy hypersurface.  Assume moreover that there exists a constant \( c>0 \) such that
\begin{equation}\label{eq:thetaalphageqc-intro}
\theta_\alpha \ge c >0
\qquad
\text{for \(\vol_V\)-a.e.\ \(\alpha \in V\)} .
\end{equation}%
Then 
\begin{equation*}\label{eq:tauVleq1/c-intro}
\sup_{x\in I^{-}(V)} |\tau_{V}(x)|  \leq \left(\frac{1}{(n+1)c}\right)^{\frac{1}{n}}.
\end{equation*}
In particular, $(M,g)$  is not past timelike geodesically complete.

Moreover, for any (possibly non-smooth) past extension $(X,\sfd,\mm, \ll, \leq, \tau)$ of $(M,g)$ which is a timelike non-branching, globally hyperbolic, Lorentzian geodesic space,  satisfying  the $\mathsf{TMCP}^{e}(0,n+1)$ condition, together with its causally-reversed structure, it holds that
\begin{equation}\label{eq:tauVleq1/cCor1-Intro}
\sup_{x\in I^{-}_X(V)} |\tau_{V}(x)|  \leq \left(\frac{1}{(n+1)c}\right)^{\frac{1}{n}},
\end{equation}
where $I^{-}_X(V)\subset X$ is the chronological past of $V$ in $X$.
\\In particular, $(X,\sfd,\mm, \ll, \leq, \tau)$  is not past timelike geodesically complete.
\end{theorem}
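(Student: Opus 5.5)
The plan is to reduce the statement to the one-dimensional estimate of \Cref{lem:MCP-asymptotic-bound}, applied ray by ray to the localization \eqref{eq:DisintVolvSmooth-Intro}--\eqref{eq:malpha-Intro} of the volume measure along the transport rays of $\tau_V$. Here $N=n+1$.

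\textbf{Step 1: localization.} Since $(M,g)$ is smooth, globally hyperbolic and satisfies \eqref{eq:SEC-Intro}, it satisfies $\TMCP^e(0,n+1)$. The same holds for its causally-reversed structure, since the strong energy condition is time-symmetric. The localization theorem for $\tau_V$, which I assume stated earlier in the paper, gives a partition of $I^+(V)\cup I^-(V)$, up to a $\vol_g$-null set, into rays $X_\alpha$ with $\alpha\in V$. Each ray is parametrized by $t=\tau_V$ on an interval $I_\alpha\ni 0$ and carries a density $h_\alpha$ with $h_\alpha(0)=1$. Moreover, for $\vol_V$-a.e.\ $\alpha$, the space $(I_\alpha,|\cdot|,h_\alpha\L^1)$ satisfies $\MCP(0,n+1)$.

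The point needing care is that the $\MCP$ condition must hold \emph{across $t=0$}, i.e.\ on the whole ray through $V$, not separately on its future and past halves. To obtain this I would use that $V$ is Cauchy, so every past-inextendible segment of a ray must cross $V$, together with the two-sided ($\TMCP$ for both time orientations) localization. For $\sup I_\alpha=+\infty$, note that $\theta_\alpha\ge c>0$ in \eqref{eq:defthetaalpha-intro} presupposes that $h_\alpha$ is defined for all large $t$. Hence a.e.\ ray is future-complete.

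\textbf{Step 2: apply \Cref{lem:MCP-asymptotic-bound}.} With $h_\alpha(0)=1$ and $\theta_\alpha\ge c$, the lemma gives
\[
I_\alpha\subset\left(-\left(\tfrac{1}{(n+1)\theta_\alpha}\right)^{1/n},+\infty\right)\subset\left(-\left(\tfrac{1}{(n+1)c}\right)^{1/n},+\infty\right)
\]
for a.e.\ $\alpha$. Consequently $|\tau_V|\le ((n+1)c)^{-1/n}$ holds $\vol_g$-a.e.\ on $I^-(V)$.

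\textbf{Step 3: from a.e.\ to every point.} I would use two facts. First, $\tau_V$ is continuous; this holds for compact $V$ in a globally hyperbolic space, where $\tau$ is continuous and the supremum is attained. Second, $I^-(V)$ is open, so a bound holding on a full-measure subset of an open set extends to the whole set. Taking the supremum gives the stated bound. Incompleteness then follows: a past-complete unit-speed timelike geodesic orthogonal to $V$ would realize arbitrarily large $|\tau_V|$, because such a geodesic maximizes distance to $V$ up to the first focal point. More simply, any past-complete timelike geodesic starting in $I^-(V)$ leaves every bounded sublevel set of $\tau_V$, since its own length gives a lower bound on $|\tau_V|$.

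\textbf{Step 4: past extensions.} For an extension $X$, the same argument runs verbatim with the synthetic localization of $\TMCP^e(0,n+1)$ for timelike non-branching globally hyperbolic Lorentzian geodesic spaces, using the causally-reversed structure as before. The key extra observation is that $\theta_\alpha$ depends only on the future part of each ray, which lies in $M$, provided $V$ remains achronal and Cauchy-like relevant in $X$ so that the future part of $I^+_X(V)$ coincides with $I^+(V)$. Hence the hypothesis $\theta_\alpha\ge c$ transfers to $X$, and the rays in $X$ extend only into the past. This gives \eqref{eq:tauVleq1/cCor1-Intro}, and past incompleteness follows as before.

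\textbf{Main obstacle.} I expect the hardest part to be Steps 1 and 4: justifying that the one-dimensional densities satisfy $\MCP(0,N)$ across $t=0$, with the normalization $h_\alpha(0)=1$ matching the disintegration of $\vol_V$. In the extension case one must also check that the rays and densities on the future side are unchanged in $X$.
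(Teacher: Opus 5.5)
Your proposal is correct and follows essentially the paper's route: the paper deduces the smooth theorem from the synthetic \Cref{Thm:Secondsingular}, taking $Q^-=V$ and $\bar{\qq}=\vol_{g_V}$ in the localization along the rays of $\tau_V$, applying \Cref{lem:MCP-asymptotic-bound} ray by ray, passing from a.e.\ to every point by semicontinuity of $-\tau_V$ and full support of the measure, and handling extensions via $I^+_X(V)=I^+_M(V)$, exactly as you outline. One small correction: drop the claim that normal geodesics maximize up to the first focal point, since maximality only holds up to the cut point, which may come earlier; your ``more simply'' reverse-triangle-inequality argument is the right one and is what the paper uses.
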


The second part of the theorem may be viewed as an inextendibility statement. If a smooth
spacetime satisfying the assumptions above is embedded into a possibly
non-smooth past extension which is timelike non-branching, globally
hyperbolic and satisfies $\TMCP^e(0,n+1)$ in both time orientations, then
the same upper bound \eqref{eq:tauVleq1/cCor1-Intro} holds in the
extension. Hence the singularity cannot be removed by passing to a
lower-regularity extension within this synthetic curvature class.

\subsubsection*{Sharpness and rigidity}

The estimate \eqref{eq:tauVleq1/cCor1-Intro} is sharp. For $N>2$, let
$(Z,\sfd_Z,\mm_Z)$ be a compact
$\CD(-(N-2),N-1)$ metric measure space and consider the Minkowski
$(N-1)$-cone
\[
    \mathcal M^{N-1}(Z)
    =
    {}^{-}[0,+\infty)\times_{\mathrm{id}}^{\,N-1} Z,
    \qquad
    \mathrm d\mm_{\mathcal M}
    =
    r^{N-1}\,\mathrm dr\,\mathrm d\mm_Z.
\]
The results of \cite{CalistiKettererSaemann} imply that this space satisfies
$\TMCP^e(0,N)$. For the slice $$V=\{r_0\}\times Z,$$ the transport rays are
radial and
\[
    h_\alpha(s)
    =
    \left(1+\frac{s}{r_0}\right)^{N-1},
    \qquad
    \theta_\alpha
    =
    \frac1{N r_0^{N-1}}.
\]
Therefore
\[
    \sup_{I^-(V)}|\tau_V|
    =
    r_0
    =
    \left(\frac1{N\theta_\alpha}\right)^{\frac1{N-1}},
\]
so equality is attained.

In the smooth category, these models are rigid. More precisely, assume
that $(M,g)$ is connected, $V$ is connected, and equality holds in
\eqref{eq:tauVleq1/cCor1-Intro}. The asymptotic lower bound first
implies the mean-curvature estimate
\[
    H_V\geq \frac nT,
    \qquad
    T:=\left(\frac1{(n+1)c}\right)^{1/n}.
\]
Equality in the time-separation bound produces a maximal past-directed
$V$-ray of length $T$. A Lorentzian maximal-distance rigidity theorem (see \cite{GrafSplitting}),
combined with the Raychaudhuri--Riccati equation, then propagates equality
to the whole spacetime and yields
\[
    (M,g)
    \simeq
    \left(
    (0,+\infty)\times V,\,
    -\mathrm dr^2+\frac{r^2}{T^2}g_V
    \right).
\]
Thus equality in the smooth theorem characterizes Lorentzian cones; in
particular, if $g_Z:=T^{-2}g_V$, then
\[
    \Ric_{g_Z}\geq -(n-1)g_Z.
\]

In the non-smooth setting of $\TMCP^e(0,N)$ spaces, a meaningful rigidity theory survives at the level of the
conditional measures, see Proposition \ref{prop:raywise-measure-rigidity}. We prove that a maximal transport ray has the exact
model density on its future half,
\[
    h_\alpha(t)
    =
    \left(1+\frac tT\right)^{N-1},
    \qquad t\ge0,
\]
and obtain quantitative bounds on its past density. If maximality holds
for almost every ray and the corresponding sharp past-volume equality is
satisfied, then the full conditional densities are conical. Under an
additional uniqueness/common-tip assumption, inspired by Ohta's
maximal-diameter rigidity theorem for $\MCP(N-1,N)$ spaces
\cite[Theorem~5.5]{OhtaJLMS}, this yields a topological measure-cone
structure.

\subsubsection*{Related literature}

The present work belongs to the rapidly developing theory of synthetic
Lorentzian geometry. The Timelike Curvature--Dimension condition and its
measure-contraction counterpart were introduced in \cite{CaMo:20},
building on the optimal-transport characterizations of timelike Ricci
curvature obtained in the smooth setting by McCann~\cite{McCann} and by
Mondino--Suhr~\cite{MoSu}; see also the survey~\cite{CaMo:22}. Among the
applications of this theory are synthetic versions of Hawking's
singularity theorem, timelike Bishop--Gromov and Bonnet--Myers
inequalities, and Lorentzian isoperimetric-type inequalities
\cite{CaMo:20,cavalletti2024sharpisoperimetrictypeinequalitylorentzian}.

The use of $\TMCP^e$ rather than the stronger $\TCD^e_p$ condition is
particularly natural here. Localization of $\TMCP^e(K,N)$ yields
one-dimensional $\MCP(K,N)$ densities, and the sharp lifetime estimate
proved in this paper is already encoded in this weaker one-dimensional
comparison. This also makes the theorem applicable to generalized
Lorentzian cones constructed in \cite{CalistiKettererSaemann}, which
provide the equality models for our result.

Related developments include inextendibility theorems for Lorentzian
pre-length spaces satisfying synthetic lower bounds on timelike sectional
curvature \cite{GKS:19,AGKS:23,BHS:26}, singularity theorems in the
framework of closed cone structures \cite{Min}, and further comparison
and rigidity results in synthetic Lorentzian geometry; see, among others,
\cite{Braun,Octet,BraunMcCann,Braun:exact,BR:GL}.

A notable feature of the synthetic framework is its ability to accommodate
singular spacetimes lying beyond the scope of classical Lorentzian
geometry. Examples include Penrose's impulsive gravitational waves
\cite{MRS:IPPW} and spacetimes endowed with Lipschitz Lorentzian metrics
whose timelike Ricci curvature is bounded below in the sense of
distributions \cite{BSC:Lipsch}. The present paper contributes to this
program by identifying a new, genuinely asymptotic mechanism for timelike
geodesic incompleteness and by determining its sharp models and rigidity
properties.

\subsection*{Acknowledgments}
For the purpose of Open Access, the authors have applied a CC BY public copyright licence to any Author Accepted Manuscript (AAM) version arising from this submission.

A.\,M.\ acknowledges support from the European Research Council (ERC) under the European Union's Horizon 2020 Research and Innovation Programme, Grant Agreement No.\ 802689, ``CURVATURE''.

Part of this work was carried out during the Thematic Program ``Nonsmooth Riemannian and Lorentzian Geometry'' at the Fields Institute (Toronto), the workshop ``Non-regular Spacetime Geometry'' at the Erwin Schr\"odinger International Institute for Mathematics and Physics (ESI) in Vienna, and the Trimester Program ``Metric Analysis'' at the Hausdorff Institute for Mathematics (HIM) in Bonn.  The authors are grateful to these institutions and to the organisers of the respective programs for their hospitality and for providing inspiring and productive research environments.

\section{Preliminaries}\label{S:Preliminaries}

\subsection{Lorentzian geodesic spaces and synthetic Ricci lower bounds}\label{Ss:Ricci-curvature}

\subsubsection{Some basics on Lorentzian pre-length spaces}\label{SS:BasicsLPLS}

The general framework for the paper is the one of Lorenzian pre-length spaces introduced by Kunzinger-S\"amann in \cite{KS}, inspired by earlier works of Kronheimer-Penrose \cite{CausalSpace} on causal spaces and of Busemann \cite{Busemann} on timelike spaces. An alternative approach put forward by Minguzzi-Suhr  \cite{MiSu:LMP24} derived many topological properties out of basic assumptions on the time-separation. Variants have been studied by several authors with different motivations, let us mention \cite{McCann,  BraunMcCann, ByMiSu:LMP25, Octet, MoSa:25, CaMaMo:25}.
\smallskip

Recall that  $(X,\sfd, \ll, \leq, \tau)$ is a \emph{Lorentzian pre-length space} provided:  $(X,\ll,\leq)$ is a \emph{causal space} (i.e., $\leq$ is a preorder and $\ll$ a transitive relation contained in $\leq$)  additionally  equipped with a proper metric $\sfd$  (i.e., closed and bounded subsets are compact) and a lower semicontinuous function $\tau: X\times X\to [0,\infty]$,  called \emph{time-separation function}, satisfying
\begin{equation}\nonumber
\begin{split}
\tau(x,y)+\tau(y,z)\leq \tau (x,z) &\quad\forall x\leq y\leq z \quad \text{reverse triangle inequality} \\
\tau(x,y)=0, \; \text{if } x\not\leq y, & \quad  \tau(x,y)>0 \Leftrightarrow x\ll y.
\end{split}
\end{equation}
We write $x<y$ whenever $x\le y$ and $x\neq y$. 
We say that $x$ and $y$ are \emph{timelike} (resp.\ \emph{causally}) related if $x\ll y$ (resp.\ $x\le y$).

Let $A\subset X$ be an arbitrary subset. 
The \emph{chronological} (resp.\ \emph{causal}) future of $A$ is defined as
\begin{align*}
I^{+}(A)&:=\{y\in X:\ \exists\, x\in A \text{ such that } x\ll y\},\\ 
\qquad 
J^{+}(A)&:=\{y\in X:\ \exists\, x\in A \text{ such that } x\le y\}.
\end{align*}
Analogously, one defines the \emph{chronological} (resp.\ \emph{causal}) past of $A$.

If $A=\{x\}$ is a singleton, we will slightly abuse notation and write 
$I^{\pm}(x)$ (resp.\ $J^{\pm}(x)$) instead of 
$I^{\pm}(\{x\})$ (resp.\ $J^{\pm}(\{x\})$).

We say that  $(X,\sfd, \ll_r, \leq_r, \tau_r)$ is the \emph{causally-reversed structure} of $(X,\sfd, \ll, \leq, \tau)$ if 
$$
x\ll_r y \iff y\ll x, \quad x\leq_r y\iff y\leq x, \quad \tau_r(x,y)=\tau(y,x).
$$
The set of (resp.\ \emph{timelike}) \emph{geodesics} is defined as: 
\begin{align*}
&\Geo(X):= \{ \gamma\in C([0,1], X)\,:  \, \tau(\gamma_{s}, \gamma_{t})=(t-s)\, \tau(\gamma_{0}, \gamma_{1}),\; \forall s<t\},\\
& \TGeo(X):=\{ \gamma\in \Geo(X) \,:  \,  \tau(\gamma_{0}, \gamma_{1}) > 0\}.   
\end{align*}

\begin{definition}[Timelike non-branching]\label{def:TNB}
A  Lorentzian pre-length space $(X,\sfd, \ll, \leq, \tau)$ is said to be \emph{forward timelike non-branching}  if and only if for any $\gamma^{1},\gamma^{2} \in \TGeo(X)$, it holds:
$$
\exists \;  \bar t\in (0,1) \text{ such that } \ \forall t \in [0, \bar t\,] \quad  \gamma_{ t}^{1} = \gamma_{t}^{2}   
\quad 
\Longrightarrow 
\quad 
\gamma^{1}_{s} = \gamma^{2}_{s}, \quad \forall s \in [0,1].
$$
$X$ is said to be \emph{backward timelike non-branching} if the causally-reversed structure is forward timelike non-branching. In case $X$ is both forward and backward timelike non-branching it is said \emph{timelike non-branching}.
\end{definition}  
Concerning the causal ladder (see \cite{KS,Minguzzi:23} for more details), 
we will only consider \emph{Lorentzian geodesic spaces,} i.e. 
 Lorentzian pre-length spaces $(X,\sfd, \ll, \leq, \tau)$ that  additionally are: 
\begin{itemize}
\item \emph{$\sfd$-Compatible:} every $x\in X$ admits a neighbourhood $U$ and a constant $C$ such that $L_{\sfd}(\gamma)\leq C$ for every causal curve $\gamma$ contained in $U$;
\item  \emph{Geodesic:} for all $x,y\in X$ with $x<y$ there is a future-directed causal curve $\gamma$ from $x$ to $y$ with $\tau(x,y)= L_{\tau}(\gamma)$.
\end{itemize}
We consider the following  version of global hyperbolicity that fits with the previous literature (see \cite[Cor.\;3.8]{Minguzzi:23}): 
a Lorentzian geodesic space $(X,\sfd, \ll, \leq,\tau)$ is called
\begin{itemize}
\item \emph{Causal}: if $\leq$  is also antisymmetric, i.e. $\leq$ is an order; 
\item \emph{Globally hyperbolic}: if it is causal and for every $x,y\in X$ the causal diamond $J^{+}(x)\cap J^{-}(y)$ is compact in $X$.
\end{itemize}

\subsubsection{Timelike optimal transport in a Lorentzian pre-length space}

Let $\mathcal{P}(X)$ be the set of Borel probability measures over $X$.  Denote by $P_i:X\times X\to X$, $i=1,2$ the projection map and let $(P_i)_{\sharp}: \mathcal{P}(X\times X)\to \mathcal{P}(X)$ be the corresponding push-forward map.
\\Given $\mu,\nu\in \mathcal{P}(X)$, consider
\begin{align*}
 \Pi(\mu,\nu)&:=\{\pi\in  \mathcal{P}(X\times X) \,:\, (P_{1})_{\sharp}\pi=\mu, \, (P_{2})_{\sharp}\pi=\nu \}, \nonumber \\
 \Pi_{\leq}(\mu,\nu)&:=\{\pi\in  \Pi(\mu,\nu) \,:\,  \pi(X^{2}_{\leq})=1 \}, 
\nonumber \\
  \Pi_{\ll}(\mu,\nu)&:=\{\pi\in  \Pi(\mu,\nu) \,:\,  \pi(X^{2}_{\ll})=1 \}, 
\end{align*}
where $X^{2}_{\leq}:=\{(x,y) \in X^{2}\,:\, x\leq y \}$ and   $X^{2}_{\ll}:=\{(x,y) \in X^{2}\,:\, x\ll y \}$.


We next recall the notion of $p$-Lorentz-Wasserstein distance, following the convention in \cite{CaMo:20}. The definition below extends to Lorentzian pre-length spaces the corresponding notion given in the smooth Lorentzian setting in \cite{EM17}  (see also \cite{McCann, MoSu}, and \cite{Suhr} for $p=1$). 

\begin{definition}\label{def:Wp}
Let  $(X,\sfd, \ll, \leq, \tau)$ be a Lorentzian pre-length space and let $p\in (0,1]$. Given $\mu,\nu\in \mathcal{P}(X)$, the \emph{$p$-Lorentz-Wasserstein distance} is defined by
\begin{equation}\label{eq:defWp}
\ell_{p}(\mu,\nu):= \sup_{\pi \in \Pi_{\leq}(\mu,\nu)} \left(  \int_{X\times X}  \tau(x,y)^{p} \, \pi(\di x \di y)\right)^{1/p}.
\end{equation}
When $\Pi_{\leq}(\mu,\nu)=\emptyset$ we set $\ell_{p}(\mu,\nu):=-\infty$.
\end{definition}

A coupling  $\pi\in  \Pi_{\leq}(\mu,\nu)$ maximising in \eqref{eq:defWp} is said \emph{$\ell_{p}$-optimal}. The set of \emph{$\ell_{p}$-optimal} couplings from $\mu$ to $\nu$ is denoted by $  \Pi_{\leq}^{p\text{-opt}}(\mu,\nu)$. We also set 
$$
\Pi_{\ll}^{p\text{-opt}}(\mu,\nu):= \Pi_{\leq}^{p\text{-opt}}(\mu,\nu)\cap \Pi_{\ll}(\mu,\nu)
$$
the  family of \emph{timelike}  $\ell_{p}$-optimal couplings.

By gluing of couplings, one can prove that the $p$-Lorentz-Wasserstein distance $\ell_p$ satisfies the reverse triangle inequality on causally related measures (see \cite[Proposition 2.5]{CaMo:20}), thus lifting the Lorentzian metric structure to the space of probability measures.

We conclude this subsection by recalling the notion of $\ell_p$-geodesic $(\mu_s)_{s\in [0,1]}\subset \mathcal{P}(X)$.  The evaluation map is defined by
\begin{equation}\label{def:eet}
\ee_{t}: C([0,1], X) \to X, 
\qquad 
\gamma \mapsto \ee_{t}(\gamma):=\gamma_{t}, 
\qquad \forall t\in [0,1].
\end{equation}

\begin{definition}[$\ell_p$-optimal dynamical plans and $\ell_p$-geodesics]\label{def:ellp-DOP}
Let $(X,\sfd, \ll, \leq, \tau)$ be a Lorentzian pre-length space and let $p\in (0,1]$.  
We say that $\eta\in \mathcal{P}(\Geo(X))$ is an \emph{$\ell_p$-optimal dynamical plan} from $\mu_0\in \mathcal{P}(X)$ to $\mu_1\in \mathcal{P}(X)$ if 
\begin{equation}\label{eq:defODP}
(\ee_0, \ee_1)_{\sharp} \eta 
\in 
\Pi^{p\text{-opt}}_{\leq} \big((\ee_0)_\sharp \eta, (\ee_1)_\sharp \eta\big).
\end{equation}
The collection of all $\ell_p$-optimal dynamical plans from $\mu_{0}$ to $\mu_{1}$ is denoted by ${\rm OptGeo}_{\ell_{p}}(\mu_{0}, \mu_{1})$.
\\ We say that a curve $[0,1]\ni t \mapsto \mu_{t}\in \mathcal{P}(X)$ is an \emph{$\ell_p$-geodesic} if there exists $\eta\in {\rm OptGeo}_{\ell_p}(\mu_0,\mu_1)$ such that
\[
\mu_t=(\ee_t)_\sharp \eta,
\qquad \forall t\in [0,1].
\]
\end{definition}
Observe that if $\eta\in {\rm OptGeo}_{\ell_p}(\mu_0,\mu_1)$, then the associated curve
\[
\mu_t:=(\ee_t)_\sharp \eta, 
\qquad \forall t\in [0,1],
\]
is continuous with respect to the narrow topology and satisfies
\[
\ell_p(\mu_s,\mu_t)
=
(t-s)\,\ell_p(\mu_0,\mu_1),
\qquad 
\forall\, 0\le s\le t\le 1.
\]
Finally, recall that if $X$ is a globally hyperbolic Lorentzian geodesic space, $\mu_0,\mu_1\in \Prob(X)$ have compact support and  $\Pi_{\leq}(\mu_0, \mu_1)\neq \emptyset$, then there always exists an $\ell_p$-optimal dynamical plan $\eta\in {\rm OptGeo}_{\ell_p}(\mu_0,\mu_1)$ (and hence an $\ell_p$-geodesic) connecting $\mu_0$ to $\mu_1$; see \cite[Proposition 2.33]{CaMo:20} for the proof and further properties of $\ell_p$-geodesics.

\subsubsection{Synthetic timelike Ricci lower bounds}

A \emph{measured Lorentzian pre-length space} $(X,\sfd, \mm, \ll, \leq, \tau)$ is a Lorentzian pre-length space endowed with a Radon non-negative measure $\mm$. We say that $(X,\sfd, \mm, \ll, \leq, \tau)$ is globally hyperbolic (resp. geodesic) if $(X,\sfd, \ll, \leq, \tau)$ is so.

\smallskip
We next recall the definition of the Timelike Measure Contraction Property, denoted by $\TMCP^{e}(0,N)$, as introduced in \cite{CaMo:20} (after \cite{McCann} and \cite{MoSu}). Here $K\in \R$ stands for a synthetic lower bound on the timelike Ricci curvature, and $N\in (0,\infty)$ stands for a synthetic upper bound on the dimension. Since in this paper we will only consider the case on non-negative timelike Ricci curvature, we will focus on the case $K=0$. We refer the reader to \cite{CaMo:20} for more details and to \cite{CaMo:22} for a concise overview. 

The definition is based on a suitable concavity property of the entropy functional, which we now briefly recall. Given $\mu \in \mathcal{P}(X)$,
its Boltzmann-Shannon entropy w.r.t. $\mm$ is given by
$$
\Ent(\mu|\mm) = \int_{M} \rho \log\rho \; \mm,
$$
if $\mu = \rho \, \mm$ is absolutely continuous with respect to $\mm$ and $(\rho\log \rho)_{+}$ is  $\mm$-integrable. 
Otherwise we set $\Ent(\mu|\mm) = +\infty$.
We set  $$\Dom(\Ent(\cdot|\mm)):=\{\mu\in \mathcal{P}(X)\,:\, \Ent(\mu|\mm)\in \R\},$$ to be the finiteness domain of the entropy.  Finally, recall the definition of the  Shannon entropy power:
\begin{equation}\label{eq:defSN}
U_{N}(\mu|\mm) : = \exp\left(-\frac{\Ent(\mu|\mm)}{N} \right), \quad \text{for all } \mu\in \Dom(\Ent(\cdot|\mm)).
\end{equation}

\begin{definition}[$\mathsf{TMCP}^{e}(0,N)$ condition, \cite{CaMo:20}]\label{def:TMCP}
Fix $p\in (0,1)$, $K\in \R$, $N\in (0,\infty)$. The measured Lorentzian pre-lengh space $(X,\sfd, \mm, \ll, \leq, \tau)$ satisfies $\mathsf{TMCP}^{e}(0,N)$ if and only if  for any $\mu_{0}\in \Prob_{c}(X)\cap \Dom(\Ent(\cdot|\mm))$ and for any $x_{1}\in X$
such that  $x\ll x_{1}$ for $\mu_{0}$-a.e. $x\in X$, there exists an $\ell_{p}$-geodesic $(\mu_{t})_{t\in [0,1]}$ from $\mu_{0}$ to  $\mu_{1}=\delta_{x_{1}}$
such that 
\begin{equation}\label{eq:defTMCP(0N)}
U_{N}(\mu_{t}|\mm) \geq (1-t)\, U_{N}(\mu_{0}|\mm), \quad \forall t\in [0,1).
\end{equation}
\end{definition}

\begin{remark}
The validity of the $\mathsf{TMCP}^{e}(K,N)$ condition is independent of the choice of $p\in(0,1)$ in Definition~\ref{def:TMCP}. Indeed, for any $p,q\in(0,1)$, a curve $(\mu_t)_{t\in[0,1]}$ with $\mu_1=\delta_{\bar{x}}$ is an $\ell_p$-geodesic if and only if it is an $\ell_q$-geodesic. This follows directly from the definition of $\ell_p$-geodesics (see Definition~\ref{def:ellp-DOP}) and from the observation that the unique coupling between $\mu_0$ and $\mu_1=\delta_{\bar{x}}$ is
$$
\pi=\mu_0\otimes\delta_{\bar{x}},
$$
which is therefore automatically optimal for every $p\in(0,1)$. Hence the class of admissible geodesics entering Definition~\ref{def:TMCP} does not depend on $p$, and neither does the $\mathsf{TMCP}^{e}(K,N)$ condition.
\end{remark}

\subsection{Time separation function from achronal sets}

Another key object in our analysis is the synthetic analogue of the gradient flow lines associated with the time-separation function from a given set $V$. 
We now recall some basic constructions.
\\

\textbf{Standing assumptions.} From now on we will always assume that $(X,\sfd, \ll, \leq, \tau)$ is a globally hyperbolic Lorentzian geodesic space (see subsection \ref{SS:BasicsLPLS} for the definitions).  
 \\

Under such standing assumptions, the time separation $\tau:X\times X\to \R$ is continuous (see \cite[Theorem 3.28]{KS}). A subset $V\subset X$ is called \emph{achronal} if $x\not \ll y$ for every $x,y\in V$. In particular, if $V$ is achronal, then $I^{+}(V)\cap I^{-}(V)= \emptyset$, so we can define the \emph{signed time-separation} to $V$, $\tau_{V}:X\to [-\infty, +\infty]$, by
\begin{equation}\label{eq:deftauV}
\tau_{V}(x):=
\begin{cases}
\sup_{y\in V} \tau(y,x), &\quad \text{ for }x\in I^{+}(V)\\
-\sup_{y\in V} \tau(x,y),& \quad \text{ for }x\in I^{-}(V) \\
0 &\quad \text{ otherwise}
\end{cases}.
\end{equation}
Note that $\tau_{V}$ is lower semi-continuous on $I^{+}(V)$ as supremum of continuous functions, 
and is upper semi-continuous on  $I^{-}(V)$.

To introduce a synthetic analogue of the integral curves of $-\nabla \tau_{V}$, some preparatory tools are needed. 
We briefly recall the construction and refer to \cite[Sec.\;4.1]{CaMo:20} for a more detailed treatment.

\begin{definition}[Future timelike complete (FTC) subsets, \cite{gallowayFTC}]\label{def:FTC}
A subset $V\subset X$ is \emph{future timelike complete} (FTC), if for each point  $x\in I^{+}(V)$, the intersection $J^{-}(x)\cap V \subset V$ has compact closure (w.r.t. $\sfd$) in $V$. Analogously, one defines \emph{past timelike completeness} (PTC). A subset that is both   FTC and PTC is called \emph{timelike complete}.
\end{definition}

\begin{lemma}[Lemma 4.1, \cite{CaMo:20}]\label{L:initialpoint} 
Let $(X,\sfd, \ll, \leq, \tau)$ be a globally hyperbolic Lorentzian geodesic  space and let $V\subset X$ be an achronal FTC (resp. PTC) subset. Then  for each $x\in I^{+}(V)$ (resp. $x\in I^{-}(V)$) there exists a point $y_{x}\in V$ with $\tau_{V}(x)=\tau(y_{x},x)>0$ (resp. $\tau_{V}(x)=-\tau(x,y_{x})<0$).

Moreover for all $x,z\in I^{+}(V)\cup V$,
\begin{equation}\label{eq:tauvzxtau}
\tau_{V}(z) - \tau_{V}(x) \geq \tau(y_{x},z)-\tau(y_{x},x)  \geq  \tau(x,z), 
\end{equation}
provided $(x,z) \in X^{2}_{\leq}$. An analogous statement is valid for 
$x\in I^{-}(V)$.
\end{lemma}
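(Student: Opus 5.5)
We need to prove Lemma 4.1 from CaMo:20: existence of a maximizer $y_x$ and the inequality \eqref{eq:tauvzxtau}.

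For existence, fix $x\in I^{+}(V)$. Since $\tau(y,x)=0$ whenever $y\not\le x$, we have
\[
\tau_V(x)=\sup_{y\in V}\tau(y,x)=\sup_{y\in V\cap J^{-}(x)}\tau(y,x).
\]
The set $V\cap J^-(x)$ is nonempty, and it contains a point $y$ with $y\ll x$. The FTC assumption gives that $V\cap J^{-}(x)$ has compact closure $\overline{V\cap J^-(x)}$ in $V$. I then want to show the supremum is attained on this closure. Take a maximizing sequence $y_n\in V\cap J^-(x)$ and extract a subsequence converging to some $y_x\in V$. Global hyperbolicity makes $\leq$ closed, so $y_x\le x$. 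Continuity of $\tau$ (\cite[Theorem 3.28]{KS}) gives $\tau(y_x,x)=\lim_n\tau(y_n,x)=\tau_V(x)$. This value is at least $\tau(y,x)>0$ for the chosen $y\ll x$; in particular $\tau_V(x)<\infty$ and $y_x\ll x$. The PTC case is identical after passing to the causally reversed structure.

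For the inequality, let $x,z\in I^{+}(V)\cup V$ with $x\le z$. If $x\in V$, set $y_x:=x$; this is consistent because $\tau_V(x)=0=\tau(x,x)$. The choice is harmless, since achronality forces $\tau(y,x)=0$ for all $y\in V$.

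Then $y_x\le x\le z$, and the reverse triangle inequality gives
\[
\tau(y_x,z)\ge\tau(y_x,x)+\tau(x,z).
\]
This is exactly the second inequality in \eqref{eq:tauvzxtau}.

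For the first inequality, I note that $\tau_V(x)=\tau(y_x,x)$, and that $\tau_V(z)\ge\tau(y_x,z)$ because $y_x\in V$. The second bound requires a small case check. If $z\in I^+(V)$, it is immediate from the definition of $\tau_V$ as a supremum over $V$. If $z\in V$, then $y_x\le z$ with both points in $V$, so achronality gives $\tau(y_x,z)=0=\tau_V(z)$. Subtracting yields $\tau_V(z)-\tau_V(x)\ge\tau(y_x,z)-\tau(y_x,x)$. The statement for $I^-(V)$ follows by the same argument in the causally reversed structure.

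The only delicate point is the compactness and closedness step in the existence argument. It needs the closure to lie in $V$, which the FTC definition provides, and it needs $J^{\pm}$, i.e.\ $\leq$, to be closed. The latter is standard for globally hyperbolic Lorentzian geodesic spaces and is what I would cite from \cite{KS}.
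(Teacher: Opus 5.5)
Your proof is correct and follows essentially the same route as the original argument in \cite{CaMo:20}: the FTC assumption gives relative compactness of $J^-(x)\cap V$ in $V$ and continuity of $\tau$ then shows the supremum is attained, while the inequality follows from the reverse triangle inequality (with $y_x=x$ for $x\in V$) together with $\tau_V(z)\ge\tau(y_x,z)$. One simplification: you do not need closedness of $\leq$, because $\tau(y_x,x)=\tau_V(x)>0$ already gives $y_x\ll x$, so the ``delicate point'' you flag reduces to the FTC compactness and the continuity of $\tau$.
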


The inequality \eqref{eq:tauvzxtau} can be restated by saying that $\tau_V$ is timelike reverse 1-Lipschitz on $I^+(V)$ and, separately,  
on $I^-(V)$. As shown in \cite{cavalletti2024sharpisoperimetrictypeinequalitylorentzian}, this permits to study the integral lines of maximal steep of $-\tau_V$ on $I^+(V)$ and, separately, on $I^-(V)$ giving a disintegration formula for the reference measure  
$\mm$ whose conditional measures localize the eventual Ricci curvature lower bound to the integral lines of $-\tau_V$. 

For the scope of this note, the interest will be in performing this analysis jointly on 
$I^+(V)$ and $I^-(V)$. 
To start, for ease of writing, we will use the following notation  
$$
 I^{\pm}(V)  : = (I^{+}(V)\cup I^{-}(V)\cup V).
 $$
In general $\tau_V$ fails to be reverse 1-Lipschitz on $I^\pm(V)$: consider for instance 
in Minkowski spacetime $V= \{0\}$, the origin. Let $z$ be any point in the future light cone of $0$ and $x$ be any point in the past light cone of $0$ additionally being in the timelike past of $z$. Then 
$$\tau_V(z)-\tau_V(x)=\tau(0,z)+\tau(x,0)=0<\tau(x,z),$$  contradicting the reverse 1-Lipschitz property. 

Nevertheless, as discussed below,   the reverse 1-Lipschitzianity holds in some special cases of interest. 

\begin{definition}[Empty global edge]
\label{D:GLEdge}
Let $(X,\sfd, \ll, \leq, \tau)$ be a globally hyperbolic Lorentzian geodesic space. 
Let $V\subset X$ be an achronal set. We say that $V$ has \emph{empty global edge} if for any $x \in I^-(V)$ and $z\in I^+(V)$,
any $\gamma\in \TGeo(X)$ connecting $x$ to $z$  intersects $V$. 
\end{definition}

\Cref{D:GLEdge} may be viewed as a global counterpart of the condition that an achronal set has empty edge. Recall that the \emph{edge} of an achronal set \( S \subset X \) consists of those points \( p \in S \) such that every neighbourhood \( U \subset X \) of \( p \) contains points \( p^+ \in I^+(p) \) and \( p^- \in I^-(p) \), together with a timelike curve connecting \( p^- \) to \( p^+ \) that does not intersect \( S \).  Clearly, if $V$ has empty global edge then its edge is empty as well, however the converse implication may fail.

The notion of the edge of an achronal set is classical in Lorentzian geometry and causality theory; see, for instance, the foundational references \cite{PenroseDiffTop,HawEll} or the more recent survey \cite{Ming-SurveyLCT}. 

\begin{remark}[Sufficient conditions for  $V\subset X$ to have empty global edge]
\begin{itemize}
\item Let $(M,g)$ be a spacetime with a continuous Lorentzian metric. If $V$ is a Cauchy hypersurface, then $V$ has empty global edge. For properties of Cauchy hypersurfaces in $C^0$-spacetimes, see \cite{SaC0}.
\item  Recall the definition of future Cauchy development  $D^+(V)$ of $V$: 
\[
D^+(V) : = \{ p \in X \colon \text{every past inext. timelike curve through $p$ intersects $V$}\}.
\]
If $I^+(V) \supset D^+(V)$, then $V$ has empty global edge. 
\end{itemize}
\end{remark}

\begin{proposition}\label{P:1reverseglob}
Let $(X,\sfd, \ll, \leq, \tau)$ be a globally hyperbolic Lorentzian geodesic space and let $V\subset X$ be an achronal, timelike complete subset. 

If $V$ has empty global edge, then
$\tau_V$ is reverse 1-Lipschitz over $I^\pm(V)$.
\end{proposition}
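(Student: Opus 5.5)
We have to show that $\tau_V(z)-\tau_V(x)\ge \tau(x,z)$ for all $x,z\in I^\pm(V)$ with $x\le z$. The plan is to split according to where $x$ and $z$ lie. If both lie in $I^+(V)\cup V$, or both in $I^-(V)\cup V$, the inequality is exactly \eqref{eq:tauvzxtau} of Lemma~\ref{L:initialpoint} or its time-reversed analogue. This uses that $V$ is FTC and PTC respectively, and that $\tau_V=0$ on $V$ by achronality.

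Some mixed configurations are excluded or trivial. The case $x\in I^+(V)\cup V$, $z\in I^-(V)$, $x\le z$ cannot occur: we would get $y\ll x\le z\ll y'$ (or $x\in V$ with $x\le z\ll y'$), so some point of $V$ would be chronologically before another point of $V$. This uses the push-up property $\ll\circ\le\subset\ll$, available in Lorentzian geodesic spaces (or directly in the globally hyperbolic setting), and it contradicts achronality.

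The only genuinely new case is $x\in I^-(V)$, $z\in I^+(V)$, $x\le z$. If $\tau(x,z)=0$, the claim is immediate, since then $\tau_V(z)>0>\tau_V(x)$. Otherwise $x\ll z$, and by the geodesic property together with global hyperbolicity there is $\gamma\in\TGeo(X)$ from $x$ to $z$ realizing $\tau(x,z)$. Since $V$ has empty global edge (Definition~\ref{D:GLEdge}), $\gamma$ meets $V$ at some point $v=\gamma_{t}$. Because $\gamma$ is a maximizing geodesic,
\[
\tau(x,z)=\tau(x,v)+\tau(v,z).
\]
By the definition \eqref{eq:deftauV} of the signed time-separation, $\tau(v,z)\le \sup_{y\in V}\tau(y,z)=\tau_V(z)$ and $\tau(x,v)\le\sup_{y\in V}\tau(x,y)=-\tau_V(x)$. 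Adding these gives $\tau(x,z)\le \tau_V(z)-\tau_V(x)$, as required.

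The main obstacle, mild here, is the bookkeeping around the boundary cases: one point lying on $V$, $\tau(x,z)=0$ with $x\le z$ only causally related, and making sure that the supremum defining $\tau_V$ is attained. Attainment is supplied by Lemma~\ref{L:initialpoint} via timelike completeness of $V$, although for the inequality only the supremum bound is actually needed. I would also double-check that the intersection point $v$ can be taken in the parameter interval $[0,1]$ with $x\le v\le z$. This holds automatically since $v$ lies on the causal curve $\gamma$, so $\tau(x,v)$ and $\tau(v,z)$ are given by the geodesic parametrization.
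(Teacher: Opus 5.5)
Your proof is correct and follows the paper's argument. In the only nontrivial case, $x\in I^-(V)$ and $z\in I^+(V)$, you use the empty global edge condition to find a point $\gamma_s\in V$ on a maximizing timelike geodesic from $x$ to $z$, and then split $\tau(x,z)=\tau(x,\gamma_s)+\tau(\gamma_s,z)\le\tau_V(z)-\tau_V(x)$, exactly as the paper does. The only difference is that you spell out the reduction that the paper compresses into ``it suffices to prove'': same-side pairs are handled by Lemma~\ref{L:initialpoint}, and the reversed mixed case is ruled out by push-up and achronality.
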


\begin{proof} It suffices to prove that,
for all  $x\leq z$ with $x \in I^-(V)$ and 
$z \in I^+(V)$:
\[
\tau_{V}(z) - \tau_{V}(x) \geq \tau(x,z).
\]
If $\tau(x,z) = 0$, the claim is verified as 
$\tau_{V}(x) \leq 0$ and $\tau_V(z)\geq 0$. 
If $\tau(x,z) > 0$, then there exists  $\gamma\in \TGeo(X)$ with $\gamma_0=x$ and $\gamma_1 = z$. Since $V$  is achronal with empty geodesic  boundary, 
there exists a unique  $s \in (0,1)$ such that 
$\gamma_s \in V$. 
Then 
$\tau(x,z) = \tau(x,\gamma_s) + \tau(\gamma_s,z)
\leq \tau_V(z) - \tau_V(x)$;
by definition indeed
$- \tau_V(x) = \sup_{y\in V} \tau(x,y)$. 
The claim is therefore proved.
\end{proof}

In \cite[Section 6]{cavalletti2024sharpisoperimetrictypeinequalitylorentzian}, the localization of a general reverse 1-Lipschitz function is discussed in detail. 
Hence we now confine ourselves to reporting the main definitions and results.

Given $V\subset X$,  an achronal set with empty global edge, define:
\begin{equation}\label{E:GammaV}
\begin{split}
\Gamma_{V} : = &~ \{ (x,z) \in I^{\pm}(V)^{2} \cap X^{2}_{\leq} \, \colon \, 
 \tau_{V}(z) - \tau_{V}(x)  = \tau(x,z)>0 \} \\ 
 &\qquad \cup \{(x,x) \,:\, x\in I^{\pm}(V)\}.
 \end{split}
\end{equation}
The monotonicity of $\Gamma_{V}$ ensures that pairs of points belonging to $\Gamma_{V}$ are aligned along geodesics: if $(x,z) \in \Gamma_{V}$ with $x \neq z$ and $x \in I^{+}(V)$, then
there exist $y \in V, \gamma \in \TGeo(X)$ and $t \in (0,1)$ such that $y=\gamma_0, \;x = \gamma_{t}, \; z=\gamma_1$, and
$$ \tau(y,\gamma_{s}) = \tau_{V}(\gamma_{s})  \quad \forall s\in [0,1], \qquad
(\gamma_{s},\gamma_{t}) \in \Gamma_{V},  \quad \forall s\in [0,t].
$$
An analogous property holds true if $z \in I^{-}(V)$. Next, define 
$$\Gamma_{V}^{-1}:=\{(x,y)\,:\, (y,x)\in \Gamma_{V}\}$$
 and consider the \emph{transport relation} $R_{V}$ and 
the \emph{transport set with endpoints} $\T_{V}^{end}$
\begin{equation}\label{E:transport}
R_{V} : = \Gamma_{V} \cup \Gamma_{V}^{-1}, \qquad 
\T_{V}^{end} : = P_{1}(R_{V}\setminus \{ x = y \}).
\end{equation}
The transport relation $R_{V}$ will be an equivalence relation on a suitable subset of 
$\T_{V}^{end}$, once initial and final points of the integral lines are removed. The sets
\begin{equation}\label{eq:defendpoints}
\begin{split}
\fa = \fa(\T_{V}^{end}) : =&~ \{ x \in \T_{V}^{end} \colon \nexists y \in \T_{V}^{end} \ s.t. \ (y,x) \in \Gamma_{V}, y\neq x \} \\
\fb = \fb(\T_{V}^{end}) : =&~ \{ x \in \T_{V}^{end} \colon \nexists y \in \T_{V}^{end} \ s.t. \ (x,y) \in \Gamma_{V}, y\neq x \},
\end{split}
\end{equation}
are the \emph{initial} and \emph{final points}, respectively. 
The \emph{transport set without endpoints} is defined by:
\begin{equation}\label{eq:defTV}
\T_{V} : = \T_{V}^{end} \setminus (\fa(\T_{V}^{end}) \cup \fb(\T_{V}^{end})).
\end{equation}

\begin{remark}\label{R:cutlocus}
The set $\fa(\T_{V}^{end})$ shall be understood as the past cut-locus of $V$ while 
$\fb(\T_{V}^{end})$ the future one, being indeed those points, in the smooth scenario, where the geodesics stops to be maximizing. 
\end{remark}

\noindent
If  additionally $V\subset X$ is timelike complete, 
repeating the argument of \cite[Lemma 4.4]{CaMo:20} one can prove that 
$$ 
I^{+}(V) \cup I^{-}(V)  = \mathcal{T}_{V}^{end}    \setminus V.
$$
If $X$ is timelike (both backward and forward) non-branching, then the transport relation $R_{V}$ defines an equivalence relation on $\T_{V}$. 
The corresponding equivalence classes are timelike geodesics that locally maximize the function $\tau_{V}$; we denote them by $X_{\alpha}$.
Moreover, there exists a measurable quotient map 
\[
\QQ : \T_{V} \to Q,
\]
associated with the equivalence relation $R_{V}$ on $\T_{V}$.

The corresponding disintegration of the reference measure $\mm$ and the localization of the Ricci curvature bounds were established in \cite{CaMo:20} (see also \cite{cavalletti2024sharpisoperimetrictypeinequalitylorentzian, BraunMcCann}). We denote by $\mathcal{M}_{+}(X)$ the space of non-negative Radon measures over $X$.

\begin{theorem}\label{T:disintegrationR}
Let  $(X,\sfd, \mm, \ll, \leq, \tau)$ be a globally hyperbolic, timelike non-branching, Lorentzian geodesic  space satisfying $\TMCP^e(0,N)$, 
assume that the causally-reversed structure satisfies the same conditions and 
let $V\subset X$ be a Borel achronal timelike complete subset with empty global edge. \\ Let $\T_{V}^{end}, \fa(\T_{V}^{end}), \fb(\T_{V}^{end})$ and $\T_{V}$ defined in \eqref{E:transport}, \eqref{eq:defendpoints} and \eqref{eq:defTV}.

Then $\mm(\fa(\T_{V}^{end}))=\mm(\fb(\T_{V}^{end})=0$ and the following disintegration formula holds: 
\begin{equation}\label{E:disintegration}
\mm\llcorner_{\T^{end}_{V}} = 
\mm\llcorner_{\T_{V}} 
= \int_{Q} \mm_{\alpha}\, \qq(\di\alpha),
\end{equation}
where $\qq$ is a Borel probability measure over a quotient set $Q \subset \T_{V}$ such that 
$\QQ_{\sharp}( \mm\llcorner_{\T_{V}} ) \ll \qq$ and the map 
$Q \ni \alpha \mapsto \mm_{\alpha} \in \mathcal{M}_{+}(X)$ satisfies the following properties:
\begin{itemize}
\item[(1)] for any $\mm$-measurable set $B\subset X$, the map $\alpha \mapsto \mm_{\alpha}(B)$ is $\qq$-measurable; \smallskip
\item[(2)] for $\qq$-a.e. $\alpha \in Q$, $\mm_{\alpha}$ is concentrated on $\QQ^{-1}(\alpha) = X_{\alpha}$ (strong consistency); \smallskip
\item[(3)] for $\qq$-a.e. $\alpha \in Q$,  
$\mm_{\alpha}\ll  \L^{1}\llcorner_{X_{\alpha}}$;

\item[(4)] for $\qq$-a.e.\,$\alpha \in Q$, the one-dimensional metric measure space 
$(X_{\alpha},|\cdot|, \mm_{\alpha})$ satisfies the classical $\MCP(0,N)$; namely,\;writing $\mm_\alpha=h(\alpha, \cdot)\L^{1}\llcorner_{X_{\alpha}}$, then $h(\alpha, \cdot)$ has an almost everywhere 
representative that is locally Lipschitz and strictly positive 
in the interior of $X_{\alpha}$, continuous on its closure, and satisfying 
\begin{equation}\label{E:MCP0N1d}
\left(\frac{b-\tau_{V}(x_{1})}{b-\tau_{V}(x_{0})}\right)^{N-1}
\leq \frac{h(\alpha, x_{1} ) }{h (\alpha, x_{0})} \leq 
\left( \frac{\tau_{V}(x_{1}) - a }{\tau_{V}(x_{0}) -a)} \right)^{N-1},  
\end{equation}
for all $x_{0},x_{1}\in X_{\alpha}$, with 
$ a< \tau_{V}(x_{0})<\tau_{V}(x_{1})<b$, where $$a:=\inf_{x\in X_\alpha} \tau_V(x), \quad b=\sup_{x\in X_\alpha} \tau_V(x).$$  
\end{itemize}
Moreover, fixed any $\qq$ as above such that $\QQ_{\sharp}( \mm\llcorner_{\T_{V}} ) \ll \qq$, the disintegration is $\qq$-essentially unique in the following sense: if any other 
map $Q \ni \alpha \mapsto \bar \mm_{\alpha} \in \mathcal{P}(X)$
satisfies points (1)-(2), then 
$\bar \mm_{\alpha} = \mm_{\alpha}$ for $\qq$-a.e.  $\alpha \in Q$.
\end{theorem}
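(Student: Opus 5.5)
The plan is to reduce the statement to the general localization theory for reverse $1$-Lipschitz functions developed in \cite[Section 6]{cavalletti2024sharpisoperimetrictypeinequalitylorentzian}, building on \cite{CaMo:20}. \Cref{P:1reverseglob} is exactly the ingredient that makes this reduction possible jointly on $I^{\pm}(V)$, rather than separately on $I^+(V)$ and $I^-(V)$. Since $V$ is achronal, timelike complete and has empty global edge, that proposition gives that $\tau_V$ is reverse $1$-Lipschitz on all of $I^{\pm}(V)$. \Cref{L:initialpoint} guarantees that $\tau_V$ is finite there and that footpoints $y_x\in V$ exist. Hence the set $\Gamma_V$ in \eqref{E:GammaV} is the maximal-steepness relation of a globally defined reverse $1$-Lipschitz function, which is what the abstract theory requires.

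The steps I would carry out are as follows.

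\textbf{Step 1 (measurability).} Show that $\Gamma_V$, $R_V$, $\T_V^{end}$, $\fa$ and $\fb$ are $\sigma$-compact or Suslin sets. Since $\tau_V$ is only semicontinuous on each half, I would write $I^{\pm}(V)$ as an increasing union of compact sets on which $\tau_V$ is a uniform limit of continuous functions, using the continuity of $\tau$ and properness. Alternatively, I would note that $\Gamma_V$ is the projection of a closed set involving the footpoint $y\in V$.

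\textbf{Step 2 (equivalence relation and quotient).} Using forward and backward timelike non-branching, check that $R_V$ restricted to $\T_V$ is an equivalence relation whose classes $X_\alpha$ are timelike geodesics on which $\tau_V$ is an affine parameter. A ray crossing $V$ is handled by the concatenation argument in the proof of \Cref{P:1reverseglob}: the segment through $\gamma_s\in V$ is maximal for $\tau_V$ on both sides. A measurable section then gives $\QQ$ and $Q$. The disintegration theorem yields $\qq$, items (1)--(2), and the $\qq$-essential uniqueness.

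\textbf{Step 3 (negligible endpoints).} Show $\mm(\fa)=0$ by contradiction. If $\mm(\fa)>0$, take a compact subset $A$ of positive measure, contained in a bounded set, and a point $x_1$ in the chronological future of $A$ along the transport directions. Apply $\TMCP^e(0,N)$ to $\mu_0=\mm(A)^{-1}\mm\llcorner_A$ and $\delta_{x_1}$. The bound \eqref{eq:defTMCP(0N)} forces the intermediate measures $\mu_t$ to have entropy bounded above, hence to charge sets of positive measure. By construction these intermediate points lie in $\T_V$ strictly after the initial points, and for distinct $t$ the supports are disjoint along rays. Integrating over $t$ against the disintegration then contradicts the initial point being initial. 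The causally-reversed structure, which by assumption satisfies the same conditions, gives $\mm(\fb)=0$.

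\textbf{Step 4 (one-dimensional $\MCP(0,N)$).} Localize the curvature condition: for $\qq$-a.e.\ $\alpha$, $\mm_\alpha\ll\L^1$ and $h(\alpha,\cdot)$ satisfies \eqref{E:MCP0N1d}. The right-hand inequality comes from contracting, via $\TMCP^e(0,N)$, a family of sub-rays toward points near their future ends. The left-hand inequality comes from the same argument in the reversed structure, contracting toward the past ends; this reproduces the two-sided $\MCP$ bounds with $a$ and $b$ the infimum and supremum of $\tau_V$ on $X_\alpha$. Local Lipschitz regularity, strict positivity in the interior, and continuity up to the closure then follow from the standard one-dimensional theory of $\MCP(0,N)$ densities.

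I expect Step 4 to be the main obstacle, because $\TMCP$ contracts only toward a single Dirac mass, not ray by ray. I would follow \cite{CaMo:20}: first prove the contraction inequality for measures of the form $\int \mu_{0,\alpha}\,\qq(\di\alpha)$ evolving toward points chosen on each ray. This uses that, by non-branching and the structure of $\Gamma_V$, the $\ell_p$-geodesic toward $\delta_{x_1}$ stays inside the transport set. I would then pass from single targets to families of targets by a localization-of-inequalities argument, reading off the one-dimensional $\MCP(0,N)$ inequality from the self-improvement of entropy-power concavity. The new point compared with \cite{CaMo:20}, namely that the rays are allowed to cross $V$, is fully absorbed by \Cref{P:1reverseglob}. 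So I expect no extra difficulty at $V$ itself: $V$ meets each ray in a single point and is therefore $\mm_\alpha$-negligible.
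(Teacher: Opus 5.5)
Your top-level reduction matches the paper's. The paper does not reprove this theorem: it recalls it from \cite{CaMo:20} (see also \cite{cavalletti2024sharpisoperimetrictypeinequalitylorentzian, BraunMcCann}). Its only new input is \Cref{P:1reverseglob}, which makes $\tau_V$ reverse $1$-Lipschitz on all of $I^{\pm}(V)$, so that the localization of reverse $1$-Lipschitz functions from \cite[Section 6]{cavalletti2024sharpisoperimetrictypeinequalitylorentzian} runs across $V$. Had you stopped at that reduction there would be nothing to add. You then give your own arguments for the substantive conclusions, however, and Step 3 fails as written.

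\textbf{Step 3.} The obstruction is the one you only acknowledge in Step 4: \eqref{eq:defTMCP(0N)} concerns $\ell_p$-geodesics toward a single Dirac mass, and these do not follow the transport rays of $\tau_V$.
\begin{enumerate}
\item For the geodesic from $x$ to $x_1$ to run along the ray of $x$ you need $(x,x_1)\in\Gamma_V$. If $x_1\in\T_V$, the set of such $x$ lies in the closure of the single ray through $x_1$. So when $\mm(A)>0$ there is in general no $x_1$ ``in the future of $A$ along the transport directions''.
\item Consequently nothing forces $\mu_t$ onto the rays through $A$, and the supports of $\mu_t$ for different $t$ need not be disjoint.
\item You cannot integrate \eqref{E:disintegration} over $A$: it disintegrates $\mm\llcorner_{\T_V}$, and $\T_V\cap\fa=\emptyset$.
\item No mechanism is given by which the fact that $A$ consists of initial points produces a contradiction.
\end{enumerate}

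What is needed is a transport adapted to the rays. For instance, take $A\subset\fa\cap I^-(V)$ compact and send each $x\in A$ to a footpoint $y_x\in V$.
\begin{enumerate}
\item This coupling is $\ell_p$-optimal for every $p$, because $\tau(x,y_x)=-\tau_V(x)$ is the largest value attainable by any coupling whose second marginal lives on $V$.
\item Every interior point $z$ of these geodesics satisfies $(x,z)\in\Gamma_V$. Hence $z\in\T_V$, and in particular $z\notin A$.
\item A lower bound on $\mm$ of the intermediate sets, uniform as $t\downarrow 0$, would then contradict $\mm(A^\varepsilon\setminus A)\to 0$, where $A^\varepsilon$ is the $\varepsilon$-neighbourhood of $A$.
\end{enumerate}
That lower bound concerns a non-Dirac target, however, and does not follow from \eqref{eq:defTMCP(0N)} alone. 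This is exactly where the cited works do the real work.

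\textbf{Step 4.} It has the same problem. ``Passing from single targets to families of targets'' is the entire content of the localization, and you do not supply it. Your attribution of the two inequalities is also reversed. The right-hand inequality in \eqref{E:MCP0N1d}, i.e.\ monotonicity of $h/(\tau_V-a)^{N-1}$, comes from contracting toward the past end $a$, so from the causally-reversed $\TMCP^e(0,N)$. The left-hand one comes from contracting toward the future end $b$.

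Either invoke the cited localization theorem for Steps 3–4, as the paper does, or supply these arguments.
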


\subsection{Future Minkowski content}

Building on \Cref{T:disintegrationR}, in the previous work  \cite{cavalletti2024sharpisoperimetrictypeinequalitylorentzian}, the authors established results concerning the area of achronal subsets of $X$. 
We begin by recalling the definition of the area of an achronal set.

\begin{definition}[Timelike Minkowski content]\label{D:areaachronal}
Let  $A \subset X$ be a Borel achronal set and consider  
the signed time-separation function $\tau_{A}$ from $A$, see \eqref{eq:deftauV}.
We define the \emph{future Minkowski content} of $A$ by 
\begin{equation}\label{E:future}
\begin{split}
    \mm^{+}(A) &: = \inf_{U \in \mathcal{U}} \limsup_{\ve \to 0} \frac{\mm( \tau_{A}^{-1}((0,\ve)) \cap U) }{\ve}, \\ 
\mathcal{U}&: = \{ U \subset X \colon  U \text{ open}, \ A\subset U \}.
\end{split} 
\end{equation}
Analogously, the \emph{past  Minkowski content} of $A$  is defined by 
\begin{equation}\label{E:pastMink}
\begin{split}
    \mm^{-}(A) &: = \inf_{U \in \mathcal{U}} \limsup_{\ve \to 0} \frac{\mm( \tau_{A}^{-1}((-\ve,0)) \cap U) }{\ve}.
\end{split} 
\end{equation}
\end{definition}

\begin{remark}\label{rem:mm+=vol}
If $X$ is a smooth, globally hyperbolic, Lorentzian manifold and $A\subset X$ is a smooth, achronal, future causally complete hypersurface, then $\mm^+(A)$ coincides with the standard area of $A$ computed with respect to the restriction of the ambient Lorentzian metric; see \cite[Remark 4.2]{cavalletti2024sharpisoperimetrictypeinequalitylorentzian}. 

In particular, as shown in \cite{TreudeGrant}, in this smooth setting the time separation function $\tau_A$ is smooth on $I^+(A)\setminus C_+(A)$, where $C_+(A)$ is the future cut-locus of $A$ that is a closed set of zero measure. 
This implies that each level set 
$\tau_A^{-1}(s)$ is a smooth submanifold of 
$I^+(A)\setminus C_+(A)$ having therefore a well-defined volume measure induced by the ambient metric. Finally the coarea formula 
\cite[Proposition 3]{TreudeGrant} gives the claim.
\end{remark}

\section{One-dimensional inequalities}

Leaving the Lorentzian setting for this short section, we will prove some one-dimensional inequalities for $\MCP(0,N)$ spaces, i.e.\ for   $(I,|\cdot|, h \,dx)$, where $I\subset \R$ is an open interval,  $h:I\to (0,\infty)$ is continuous and satisfies \eqref{E:MCP0N1d}.

The Bishop-Gromov inequality (which, in this one-dimensional setting, reduces to an elementary computation) yields that the function $$(0,\infty) \ni R\mapsto  \frac{h(R)}{NR^{N-1}}\searrow$$ is monotone non-increasing. Assuming that $\sup I=\infty$,  we denote 
 \begin{equation}\label{eq:deftheta}
 \theta=\theta_{h,N}:=\lim_{R\to \infty} \frac{h(R)}{NR^{N-1}}=\inf_{R>0} \frac{h(R)}{NR^{N-1}}.
 \end{equation}

 In the next lemma, we relate the asymptotic ``area ratio'' \eqref{eq:deftheta} to the more familiar asymptotic volume ratio,
 $$
 {\rm{AVR}}_N(h):=\lim_{R\to\infty}\frac{1}{R^N}\int_0^R h(t)\,dt
 $$
 which appeared in the recent metric geometry literature (see for instance \cite{BK-MathAnn, CaMa-JEMS}). 

 \begin{lemma}
Let $N\ge 1$ and let $h:[0,\infty)\to[0,\infty)$ be a continuous function such that
\[
t\mapsto \frac{h(t)}{Nt^{\,N-1}}
\]
is decreasing on $(0,\infty)$ and
\[
\lim_{t\to\infty}\frac{h(t)}{Nt^{\,N-1}}=\theta
\]
for some $\theta\ge 0$. Then
\[
\lim_{R\to\infty}\frac{1}{R^N}\int_0^R h(t)\,dt=\theta.
\]
\end{lemma}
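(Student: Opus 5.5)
Set $f(t):=h(t)/(Nt^{N-1})$ for $t>0$. The plan is a Cesàro-type averaging argument: write $h(t)=Nt^{N-1}f(t)$, so that
\[
\frac{1}{R^N}\int_0^R h(t)\,dt=\int_0^R f(t)\,\frac{Nt^{N-1}}{R^N}\,dt,
\]
where $\frac{Nt^{N-1}}{R^N}\,dt$ is a probability measure on $[0,R]$. Since $f$ is non-increasing with limit $\theta$, we have $f\ge\theta$ everywhere. Hence the average is at least $\theta$, which gives $\liminf\ge\theta$ immediately.

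For the upper bound, fix $\varepsilon>0$ and choose $T$ with $f(t)\le\theta+\varepsilon$ for $t\ge T$. Split the integral at $T$. The tail contributes at most
\[
(\theta+\varepsilon)\,\frac{R^N-T^N}{R^N}\le\theta+\varepsilon.
\]
The head is $R^{-N}\int_0^T h(t)\,dt$. This is a fixed finite number divided by $R^N$, because $h$ is continuous on the compact interval $[0,T]$; note that $f$ may blow up near $0$, so we bound $\int h$ directly rather than using $f$. The head therefore tends to $0$ as $R\to\infty$ (for $N\ge1$, $R^N\to\infty$). Thus $\limsup\le\theta+\varepsilon$, and letting $\varepsilon\to0$ concludes.

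The only mild obstacle is the behaviour near $t=0$, where $f$ need not be bounded. This is handled by splitting at $T$ and using continuity of $h$ on $[0,T]$. Alternatively, one could use l'Hôpital in the form
\[
\lim\frac{\int_0^R h}{R^N}=\lim\frac{h(R)}{NR^{N-1}}=\theta,
\]
valid since the denominator tends to $+\infty$ and the ratio of derivatives converges. The Stolz–Cesàro/l'Hôpital version for a limit of derivative ratios requires only differentiability of $\int_0^R h$, which holds by continuity of $h$. I would present the splitting argument as the main proof, since it is elementary and self-contained.
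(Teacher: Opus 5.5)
Your proof is correct and is essentially the paper's argument: you split $\int_0^R h$ at a $T$ beyond which $f$ is within $\varepsilon$ of $\theta$, observe that the head is a fixed constant divided by $R^N$, and bound the tail using $\int_T^R Nt^{N-1}\,dt=R^N-T^N$. The differences are minor: you get the lower bound for every $R$ from $f\ge\theta$ on all of $(0,\infty)$, whereas the paper uses $\theta\le f\le\theta+\varepsilon$ only on $[T,\infty)$, and your l'H\^opital remark correctly notes that the monotonicity hypothesis is not actually needed.
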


\begin{proof}
Set
\[
f(t):=\frac{h(t)}{Nt^{\,N-1}}.
\]
By assumption, $f$ is decreasing on $(0,\infty)$ and $f(t)\to \theta$ as $t\to\infty$.
Fix $\varepsilon>0$. Then there exists $T>0$ such that
\[
\theta \le f(t)\le \theta +\varepsilon, \qquad \text{for all } t\ge T.
\]
For $R>T$, we write
\[
\frac{1}{R^N}\int_0^R h(t)\,dt
=
\frac{1}{R^N}\int_0^T h(t)\,dt
+
\frac{1}{R^N}\int_T^R Nt^{N-1}f(t)\,dt.
\]
The first term in the right hand side tends to $0$ as $R\to\infty$. For the second term, the bounds on $f$ give
\[
\theta \cdot \frac{1}{R^N}\int_T^R t^{N-1}\,dt
\le
\frac{1}{R^N}\int_T^R t^{N-1}f(t)\,dt
\le
(\theta +\varepsilon)\cdot \frac{1}{R^N}\int_T^R t^{N-1}\,dt.
\]
Since
\[
\frac{1}{R^N}\int_T^R Nt^{N-1}\,dt
=
\frac{R^N-T^N}{R^N}\longrightarrow 1
\qquad\text{as }R\to\infty,
\]
it follows that
\[
\lim_{R\to\infty}\frac{1}{R^N}\int_T^R t^{N-1}f(t)\,dt=\theta,
\]
completing the proof.
\end{proof}

\begin{lemma}[The key one-dimensional estimates]\label{lem:MCP-asymptotic-bound}
Let $N\in(1,\infty)$ and let
\[
X=(I,|\cdot|,h\,\mathcal L^1)
\]
be a one-dimensional metric measure space satisfying $\MCP(0,N)$,
where $I\subset\mathbb R$ is an open interval, $0\in I$,
$\sup I=+\infty$, and $h>0$ in $I$.
Then the function
\[
(0,\infty)\ni R\longmapsto
\frac{h(R)}{N R^{N-1}}
\]
is non-increasing. In particular, the limit
\[
\theta=\theta_{h,N}
:=
\lim_{R\to+\infty}\frac{h(R)}{N R^{N-1}}
\]
exists.

Moreover,
\begin{equation}\label{eq:MCP-past-bound}
I\subset
\left(
-\left(\frac{h(0)}{N\theta}\right)^{\frac1{N-1}},
+\infty
\right),
\end{equation}
with the convention that the left endpoint equals $-\infty$ if
$\theta=0$.
\end{lemma}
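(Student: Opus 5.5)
The whole argument uses only the one-dimensional $\MCP(0,N)$ inequality \eqref{E:MCP0N1d} on $I=(a,+\infty)$, with $a:=\inf I$ and $b=+\infty$.

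\emph{Monotonicity.} First we let $b\to+\infty$. The left-hand inequality in \eqref{E:MCP0N1d} then degenerates to $h(x_1)\ge h(x_0)$, which we do not need. The right-hand inequality reads
\[
\frac{h(x_1)}{h(x_0)}\le\Big(\frac{x_1-a}{x_0-a}\Big)^{N-1},\qquad a<x_0<x_1 .
\]
Hence $t\mapsto h(t)/(t-a)^{N-1}$ is non-increasing on $I$. If $a=-\infty$, this should be read as the limit obtained by letting the lower endpoint go to $-\infty$, which forces $h$ to be non-increasing. In either case we want monotonicity of $h(R)/R^{N-1}$ for $R>0$. Since $a<0$, the function $R\mapsto (R-a)/R$ is decreasing on $(0,\infty)$, so
\[
\frac{h(R)}{R^{N-1}}=\frac{h(R)}{(R-a)^{N-1}}\cdot\Big(\frac{R-a}{R}\Big)^{N-1}
\]
is a product of two non-negative non-increasing functions, hence non-increasing. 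It follows that $\theta$ exists as an infimum, and $\theta\ge 0$. When $a=-\infty$, $h$ is non-increasing, so $h(R)/R^{N-1}\to 0$ and $\theta=0$; the claim \eqref{eq:MCP-past-bound} is then trivial.

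\emph{Past bound.} Now assume $a>-\infty$ and $\theta>0$. Apply the inequality with $x_0=0$ and $x_1=R>0$:
\[
h(R)\le h(0)\Big(\frac{R-a}{-a}\Big)^{N-1}.
\]
Divide by $NR^{N-1}$ and let $R\to\infty$. This gives
\[
\theta\le \frac{h(0)}{N(-a)^{N-1}},
\]
that is, $-a\le \big(h(0)/(N\theta)\big)^{1/(N-1)}$. Since $I$ is open, $a\notin I$, and therefore $I\subset(a,\infty)\subset\big(-(h(0)/(N\theta))^{1/(N-1)},\infty\big)$. This is \eqref{eq:MCP-past-bound}.

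\emph{Main obstacle.} The only delicate point is justifying the use of \eqref{E:MCP0N1d} with $b=+\infty$ and possibly $a=-\infty$. For this I will apply the inequality on bounded subintervals $(a',b')\Subset I$. Such subintervals inherit $\MCP(0,N)$ because the condition is local in this one-dimensional sense. Then I pass to the limits $b'\to\infty$ and $a'\to a$, which is legitimate by continuity of the ratios in $a'$ and $b'$.
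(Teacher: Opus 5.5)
Your proof is correct and is essentially the paper's argument. The past bound uses the same $\MCP(0,N)$ contraction from $R$ through $0$ toward the left end of $I$, followed by $R\to+\infty$; the only difference is that the paper contracts toward interior points $-s\in I$ and then uses openness for strictness, whereas you pass directly to $a=\inf I$ and use $a\notin I$. For monotonicity the paper contracts toward $0$ and gets $h(R_1)/R_1^{N-1}\geq h(R_2)/R_2^{N-1}$ in one line. Your detour, through the monotonicity of $h(t)/(t-a)^{N-1}$ multiplied by the non-increasing factor $\bigl((R-a)/R\bigr)^{N-1}$, is also valid but needs the separate case $I=\mathbb R$. You handle that case correctly: there $h$ is non-increasing, so monotonicity is immediate and $\theta=0$.
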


\begin{proof}
We use the one-dimensional formulation of the $\MCP(0,N)$ condition.
Namely, for every $x_0,x_1\in I$ and every $t\in[0,1]$, one has
\begin{equation}\label{eq:1d-MCP}
h\bigl((1-t)x_0+t x_1\bigr)
\geq
(1-t)^{N-1}h(x_0),
\end{equation}
whenever the segment joining $x_0$ and $x_1$ is contained in $I$.

We first prove the monotonicity of the asymptotic area ratio. Fix
$0<R_1<R_2$ and apply \eqref{eq:1d-MCP} with
\[
x_0=R_2,\qquad x_1=0,
\qquad
t=1-\frac{R_1}{R_2}.
\]
Since
\[
(1-t)R_2+t\,0=R_1,
\]
we obtain
\[
h(R_1)
\geq
\left(\frac{R_1}{R_2}\right)^{N-1}h(R_2).
\]
Equivalently,
\[
\frac{h(R_1)}{R_1^{N-1}}
\geq
\frac{h(R_2)}{R_2^{N-1}}.
\]
It follows that
\[
R\longmapsto\frac{h(R)}{N R^{N-1}}
\]
is non-increasing, and hence the limit defining $\theta$ exists.

Let now $s>0$ be such that $-s\in I$. For any $R>0$, apply
\eqref{eq:1d-MCP} to
\[
x_0=R,\qquad x_1=-s,
\qquad
t=\frac{R}{R+s}.
\]
Since
\[
(1-t)R+t(-s)=0,
\qquad
1-t=\frac{s}{R+s},
\]
we infer that
\begin{equation}\label{eq:MCP-0-R}
h(0)
\geq
\left(\frac{s}{R+s}\right)^{N-1}h(R).
\end{equation}
Rewriting the right-hand side gives
\[
h(0)
\geq
N s^{N-1}
\left(\frac{R}{R+s}\right)^{N-1}
\frac{h(R)}{N R^{N-1}}.
\]
Passing to the limit as $R\to+\infty$ yields
\begin{equation}\label{eq:MCP-theta}
h(0)\geq N\theta\,s^{N-1}.
\end{equation}
If $\theta>0$, this implies
\[
s\leq
\left(\frac{h(0)}{N\theta}\right)^{\frac1{N-1}}.
\]

In fact, the inequality is strict for every $-s\in I$. Indeed, since
$I$ is open, there exists $\varepsilon>0$ such that
$-(s+\varepsilon)\in I$. Applying \eqref{eq:MCP-theta} with
$s+\varepsilon$ in place of $s$ gives
\[
s+\varepsilon
\leq
\left(\frac{h(0)}{N\theta}\right)^{\frac1{N-1}},
\]
and therefore
\[
s<
\left(\frac{h(0)}{N\theta}\right)^{\frac1{N-1}}.
\]
This proves \eqref{eq:MCP-past-bound}. If $\theta=0$, the claim is
void by convention.
\end{proof}

\section{A timelike incompleteness theorem in terms of asymptotic expansion}

Before stating and proving the result, we fix some notation.
Provided $\mm^+(V)<\infty$, it will be convenient to slightly change the normalizations of the conditional probabilities $h(\alpha,\cdot)$ in the following manner.

First, since \( Q \) can be identified with a measurable subset of \( V \), we may, without loss of generality, parametrize each ray \( X_\alpha \) so that
\[
X_\alpha(0)=X_\alpha \cap V .
\]
It then follows that
\[
\int_Q h(\alpha,0)\,\qq(\mathrm d\alpha)
\le \mm^+(V)
< \infty,
\]
and hence \( h(\cdot,0) \in L^1(\qq) \).

For our purposes, it will suffice to consider only those rays \( X_\alpha \) for which \( X_\alpha \cap V \) is a relative interior point of the ray. We denote by \( Q^- \) the set of indices with this property. Hence, since \( 0 \) is an interior point of the chosen parametrization, it follows that for \( \qq \)-a.e.\ \( \alpha \in Q^- \),
\[
h(\alpha,0) > 0.
\]
We may therefore normalize the densities so that
\begin{equation}\label{eq:halpha0=1}
h(\alpha,0)=1
\qquad \text{for } \qq\text{-a.e. } \alpha \in Q^-,
\end{equation}
by correspondingly modifying the quotient measure according to
\begin{equation}\label{eq:halpha(0)=1}
\bar{\qq}
:=
h(\alpha,0)\,\qq\llcorner_{Q^-}
+
\qq\llcorner_{Q\setminus Q^-}.
\end{equation}
Note that \( \bar{\qq} \) is no longer a probability measure. Nevertheless, it satisfies
\begin{equation}\label{eq:barqq(Q)leqmV}
\bar{\qq}(Q) \le \mm^+(V).
\end{equation}
Finally, we assume that the map
\[
\alpha \mapsto \theta_\alpha^{-1}
\]
belongs to \( L^1(\bar{\qq},Q^-) \), where \( \theta_\alpha := \theta_{h_\alpha} \) is defined as in \eqref{eq:deftheta}. 

Observe that the normalization \( h(\alpha,0)=1 \) is also needed in order to fix, once and for all, the scaling of the one-dimensional asymptotic area ratio $\theta_\alpha$.

\begin{theorem}[A synthetic singularity theorem, based on asymptotic volume growth]\label{Thm:Secondsingular}
Let $(X,\sfd,\mm, \ll, \leq, \tau)$ be  a timelike non-branching,  globally hyperbolic, Lorentzian geodesic space. Let $V \subset X$ be an achronal, timelike complete set having finite area, i.e.\ $\mm^{+}(V) <\infty$, 
and  empty global edge.  
Assume moreover that: 
\begin{itemize}
\item Both $(X,\sfd,\mm, \ll, \leq, \tau)$ and the causally-reverse structure satisfy  the $\mathsf{TMCP}^e(0,N)$ condition, for some $N\in (1,\infty)$; 
\item there exists $c>0$ such that, adopting the normalization \eqref{eq:halpha0=1},  it holds 
\begin{equation}\label{eq:thetaalphageqc}
\theta_\alpha \geq c >0 \quad \text{for $\qq$-a.e. $\alpha \in Q^-$}. 
\end{equation}
\end{itemize}
Then 
\begin{equation}\label{eq:tauVleq1/c}
\sup_{x\in I^{-}(V)} |\tau_{V}(x)|  \leq \left(\frac{1}{Nc}\right)^{\frac{1}{N-1}}.
\end{equation}
In particular, $(X,\sfd,\mm, \ll, \leq, \tau)$  is not past timelike geodesically complete. 
\end{theorem}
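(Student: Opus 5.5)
The plan is to reduce the statement to the one-dimensional estimate of \Cref{lem:MCP-asymptotic-bound}, applied ray by ray through the disintegration of \Cref{T:disintegrationR}.

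\textbf{Step 1 (disintegration).} By \Cref{P:1reverseglob}, $\tau_V$ is reverse 1-Lipschitz on $I^\pm(V)$, so \Cref{T:disintegrationR} applies jointly on $I^+(V)\cup I^-(V)$. Each ray $X_\alpha$ is parametrized by $t=\tau_V$ with $X_\alpha(0)\in V$. For $\qq$-a.e.\ $\alpha$ the density $h(\alpha,\cdot)$ satisfies the one-dimensional $\MCP(0,N)$ inequality \eqref{E:MCP0N1d} on the interior $I_\alpha:=(a_\alpha,b_\alpha)$ of the parameter interval. For $\alpha\in Q^-$ we have $0\in I_\alpha$, and we normalize $h(\alpha,0)=1$ as in \eqref{eq:halpha0=1}.

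\textbf{Step 2 (apply the lemma).} The hypothesis $\theta_\alpha\ge c>0$ implicitly requires $b_\alpha=+\infty$, which is needed for $\theta_\alpha$ to be defined. \Cref{lem:MCP-asymptotic-bound} then gives
\[
a_\alpha\ge -\left(\frac{1}{N\theta_\alpha}\right)^{\frac1{N-1}}\ge -T,
\qquad T:=\left(\frac1{Nc}\right)^{\frac1{N-1}},
\]
for $\qq$-a.e.\ $\alpha\in Q^-$. Rays whose intersection with $V$ is an endpoint cannot carry points of $I^-(V)$ below $V$; their contribution lies only in $I^+(V)$. Since $\mm(\fa\cup\fb)=0$, it follows that $\mm(\{x\in I^-(V):\tau_V(x)<-T\})=0$.

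\textbf{Step 3 (from a.e.\ to pointwise).} This is the main obstacle. Suppose there were $x\in I^-(V)$ with $\tau_V(x)<-T-2\varepsilon$. Use \Cref{L:initialpoint} (the PTC version) to pick $y_x\in V$ with $\tau(x,y_x)=-\tau_V(x)$, and let $\gamma$ be a maximizing geodesic from $x$ to $y_x$. Fix a point $p=\gamma_s$ with $\tau_V(p)<-T-\varepsilon$. Then $I^+(p)\cap I^-(\gamma_{s'})$ for nearby $s'$ is a nonempty open diamond. For $z$ in the chronological past of $p$, the reverse Lipschitz property gives $\tau_V(z)\le\tau_V(p)-\tau(z,p)<-T-\varepsilon$, so the open set $I^-(p)$ is contained in $\{\tau_V<-T\}\cap I^-(V)$.

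It remains to know that nonempty chronologically open sets have positive $\mm$-measure. I would argue this as follows. Take $\mu_0$ uniform on a small ball of positive measure in the past of $p$ and contract towards $p$ using $\TMCP^e(0,N)$. Entropy finiteness along the geodesic forces $\mu_t\ll\mm$, so $\mm$ charges $I^-(p)$. (In the smooth case this is immediate.) This contradicts Step 2, which yields \eqref{eq:tauVleq1/c}.

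\textbf{Step 4 (incompleteness).} A past inextendible timelike geodesic from a point of $V$ would realize arbitrarily large $|\tau_V|$ along it, or lengths bounded by $T$, by the global hyperbolicity argument of Hawking type. Hence its length is at most $T$, so $X$ is past timelike geodesically incomplete. The delicate points are ensuring $\supp\mm$ is large enough for Step 3, and checking that the $\bar\qq$ renormalization does not affect the null sets involved.
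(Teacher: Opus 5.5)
Your architecture matches the paper's: localize with \Cref{T:disintegrationR}, apply \Cref{lem:MCP-asymptotic-bound} ray by ray with $h_\alpha(0)=1$ to get $a_\alpha\ge -T$, conclude $-\tau_V\le T$ for $\mm$-a.e.\ point of $I^-(V)$, upgrade this to every point, and read off incompleteness. The step that fails is the upgrade in Step~3.

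Your justification that nonempty chronologically open sets are charged by $\mm$ is circular. To apply $\TMCP^e(0,N)$ with target $p$ you need $\mu_0\ll\mm$ concentrated on $I^-(p)$. So you start from ``a small ball of positive measure in the past of $p$'', which is precisely the statement $\mm(I^-(p))>0$ you want to prove.

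Moreover, no argument from the listed hypotheses can give it. Add to the cone of \Cref{prop:sharpness-lorentzian-cones} a disjoint copy of Minkowski spacetime carrying the zero measure, and add its slice $\{t=0\}$ to $V$. All the hypotheses, which only see $\mm$-almost every ray, survive: $\TMCP^e(0,N)$ in both orientations, $\mm^+(V)<\infty$, and the bound on $\theta_\alpha$. Yet $\sup_{I^-(V)}|\tau_V|=+\infty$.

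What is needed is the full-support assumption $\supp\mm=X$, which the paper invokes as a standing assumption exactly at this point. Since $-\tau_V$ is lower semicontinuous on the open set $I^-(V)$, the set $\{x\in I^-(V): -\tau_V(x)>T\}$ is open and $\mm$-null, hence empty. With full support, your explicit open set $I^-(p)\subset\{\tau_V<-T\}$ does the same job; it is correctly obtained from the reverse Lipschitz property and is nonempty since it contains $x$. The optimal-transport detour should simply be dropped.

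There are two smaller corrections.

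In Step~2, the claim that rays meeting $V$ at an endpoint contribute only to $I^+(V)$ is false for rays whose $V$-point is their \emph{future} endpoint. Such a ray lies in $I^-(V)\cup V$, and \eqref{eq:thetaalphageqc} says nothing about it. What you need is that these rays carry no $\mm$-mass. This is what the paper asserts when it says that, up to an $\mm$-null set, $I^-(V)$ is foliated by rays indexed by $Q^-$ without future endpoint. In the smooth setting this is automatic, since every normal ray crosses $V$.

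In Step~4 no Hawking-type argument is required. If a timelike geodesic ending at $\gamma_0\in V$ has length $L$ from $x$ to $\gamma_0$, then by \eqref{eq:deftauV} one has $-\tau_V(x)\ge\tau(x,\gamma_0)\ge L$. Hence $L\le T$ by \eqref{eq:tauVleq1/c}; this one-line argument is the paper's.
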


\begin{proof}
For \( \qq \)-a.e.\ \( \alpha \in Q^- \), the positivity of the one-dimensional asymptotic area ratio \( \theta_\alpha > 0 \) implies that the corresponding ray \( X_\alpha \) has no future endpoint.

Since
\[
\QQ_{\sharp}(\mm\llcorner_{\T_V}) \ll \qq,
\]
it follows that, up to a set of \( \mm \)-measure zero, the set \( I^-(V) \) is foliated by integral curves of \( \tau_V \) having no future endpoint and indexed by elements of \( Q^- \).

Hence, combining the assumption \eqref{eq:thetaalphageqc}  with the bound \eqref{eq:MCP-past-bound}, we obtain that for \( \qq \)-a.e.\ \( \alpha \),
\[
\sup_{x \in X_\alpha \cap I^-(V)} \bigl(-\tau_V(x)\bigr)
\le
\left(\frac{1}{Nc}\right)^{\frac{1}{N-1}} .
\]
The disintegration formula \eqref{E:disintegration} implies that
\[
-\tau_V(x) \le \left(\frac{1}{Nc}\right)^{\frac{1}{N-1}}  , \quad \text{for \( \mm \)-a.e.\ \( x \in I^-(V) \)}.
\]

Since \( -\tau_V \) is lower semicontinuous on \( I^-(V) \) and \( \supp\ \mm = X \) by standing assumption, the above almost-everywhere estimate extends to all of \( I^-(V) \). This concludes the proof of \eqref{eq:tauVleq1/c}.
\\

We finally show that $X$ is not past timelike geodesically complete. 
Consider any timelike geodesic $\gamma$ parametrized by arclength and defined on a maximal (on the left) interval $(-a, 0]\subset (-\infty, 0]$  such that  
$\gamma_{0} \in V$. We claim that 
$$
a\leq  \left(\frac{1}{Nc}\right)^{\frac{1}{N-1}}.
$$
Indeed, if by contradiction for some $s_{0}\in [0,a)$
$$
\tau(\gamma_{s_0}, \gamma_{0}) =s_{0} >  \left(\frac{1}{Nc}\right)^{\frac{1}{N-1}},
$$
the very  definition \eqref{eq:deftauV} of $\tau_{V}$ would imply $$-\tau_{V} (\gamma_{s_{0}})>  \left(\frac{1}{Nc}\right)^{\frac{1}{N-1}},$$ contradicting \eqref{eq:tauVleq1/c}.
\end{proof}

In the case where \( (M,g) \) is an \( (n+1) \)-dimensional globally hyperbolic smooth Lorentzian manifold, and \( V \subset M \) is a smooth, compact, achronal hypersurface with empty global edge, every ray associated with \( \tau_V \) intersects \( V \). This allows one to take \( V \) itself as the quotient set and the induced hypersurface measure \( \vol_V \) as the quotient measure. In this setting, $X_\alpha\cap V$ is an interior point of the ray $X_\alpha$ for every $\alpha$. Thus, we can choose 
\begin{equation}\label{eq:Q-=Vsmooth}
Q^-=V.
\end{equation}
Moreover, the normalization introduced above appears entirely natural and yields the following disintegration formula (which, in this case, corresponds to Fubini-Tonelli's theorem):
\begin{equation}\label{eq:DisintVolvSmooth}
\vol_g\llcorner_{I^{\pm}(V)}=\int_V h(\alpha,\cdot)\, \vol_{g_V}(\di\alpha), \qquad h(\alpha, 0) = 1, \, \vol_V-\text{a.e.} \, \alpha,
\end{equation}
where we have denoted by $\vol_g$ (resp.\ $\vol_{g_V}$) the $(n+1)$-dimensional volume measure of $(M,g)$ (resp.\ the  $n$-dimensional volume measure of $(V,g|_{TV})$).
Below we report the smooth version of \Cref{Thm:Secondsingular}.

\begin{theorem}[A singularity result, based on asymptotic volume growth -- smooth setting] \label{Thm:SecondsingularSmooth}
Let $(M,g)$ be an $(n+1)$-dimensional, globally hyperbolic, smooth Lorentzian manifold satisfying Penrose-Hawking's strong energy condition, i.e.\ $\Ric_g(v,v)\geq 0$ for all $v\in TM$ timelike. Let $V\subset M$ be a smooth, compact, Cauchy hypersurface.  Assume moreover that there exists a constant \( c>0 \) such that
\begin{equation*}
\theta_\alpha \ge c >0
\qquad
\text{for \(\vol_V\)-a.e.\ \(\alpha \in V\)} .
\end{equation*}%
Then 
\begin{equation*}
\sup_{x\in I^{-}(V)} |\tau_{V}(x)|  \leq \left(\frac{1}{(n+1)c}\right)^{\frac{1}{n}}.
\end{equation*}
In particular, $(M,g)$  is not past timelike geodesically complete.

Moreover, for any (possibly non-smooth) past extension $(X,\sfd,\mm, \ll, \leq, \tau)$ of $(M,g)$ which is a timelike non-branching, globally hyperbolic, Lorentzian geodesic space,  satisfying  the $\mathsf{TMCP}^{e}(0,n+1)$ condition, together with its causally-reversed structure, it holds that
\begin{equation*}\label{eq:tauVleq1/cCor1}
\sup_{x\in I^{-}_X(V)} |\tau_{V}(x)|  \leq \left(\frac{1}{(n+1)c}\right)^{\frac{1}{n}},
\end{equation*}
where $I^{-}_X(V)\subset X$ is the chronological past of $V$ in $X$.
\\In particular, $(X,\sfd,\mm, \ll, \leq, \tau)$  is not past timelike geodesically complete.
\end{theorem}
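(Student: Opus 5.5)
The plan is to deduce \Cref{Thm:SecondsingularSmooth} from \Cref{Thm:Secondsingular} with $N=n+1$. Both the smooth statement and the extension statement follow this way, once the hypotheses are checked and the normalizations identified.

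\textbf{Step 1: the smooth part.} Take $X=M$ with $\mm=\vol_g$, $\ll,\leq$ the usual chronological and causal relations, $\tau$ the Lorentzian time separation, and $\sfd$ any complete Riemannian distance.
\begin{itemize}
\item Global hyperbolicity of $(M,g)$ makes this a globally hyperbolic Lorentzian geodesic space.
\item Smooth geodesics do not branch, so it is timelike non-branching.
\item The strong energy condition $\Ric_g(v,v)\ge0$ on timelike $v$ gives $\TMCP^e(0,n+1)$ by McCann and Mondino--Suhr (see \cite{CaMo:20}). The condition is invariant under time reversal, so the causally-reversed structure satisfies it as well.
\item A compact Cauchy hypersurface is achronal and has empty global edge by the remark preceding \Cref{P:1reverseglob}.
\item $V$ is timelike complete, since $J^\mp(x)\cap V$ is compact by global hyperbolicity and compactness of $V$.
\item $V$ has finite area: by \Cref{rem:mm+=vol}, $\mm^+(V)=\vol_{g_V}(V)<\infty$.
\end{itemize}

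\textbf{Step 2: identifying the normalizations.} In the smooth setting we may take $Q^-=V$ by \eqref{eq:Q-=Vsmooth}, and the disintegration \eqref{eq:DisintVolvSmooth} already has $h(\alpha,0)=1$ with quotient measure $\vol_{g_V}$. The renormalized measure $\bar\qq$ in \eqref{eq:halpha(0)=1} and $\vol_{g_V}$ are mutually absolutely continuous, so they have the same null sets. By the essential uniqueness in \Cref{T:disintegrationR}, the densities agree a.e. Hence the hypothesis ``$\theta_\alpha\ge c$ for $\vol_V$-a.e.\ $\alpha$'' is exactly \eqref{eq:thetaalphageqc} with $N=n+1$. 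Applying \Cref{Thm:Secondsingular} then gives the bound with exponent $1/(N-1)=1/n$, together with past timelike incompleteness.

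\textbf{Step 3: the extension.} In $X$, all structural hypotheses ($\TMCP^e(0,n+1)$ in both time orientations, non-branching, global hyperbolicity) hold by assumption. The remaining work is to check that $V\subset M\subset X$ still satisfies the hypotheses of \Cref{Thm:Secondsingular}, and that the rays and densities $h_\alpha$ through $V$ have the same future behavior as in $M$.

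For this I use that a past extension leaves the future of $V$ untouched. Concretely, $I^+_X(V)=I^+_M(V)$, and $\tau_X$ restricted to $I^+_M(V)\cup V$ agrees with $\tau_M$ there. As a consequence:
\begin{itemize}
\item The future halves of the transport rays are unchanged.
\item By uniqueness of the disintegration, the conditional densities for $t\ge0$ coincide with the smooth ones.
\item $\theta_\alpha$ depends only on $h_\alpha$ on $(0,\infty)$, so the bound $\theta_\alpha\ge c$ transfers.
\item $\mm^+(V)$ is computed from the future side, so it is unchanged.
\end{itemize}

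\textbf{Main obstacle.} The step I expect to be hardest is the achronality and empty global edge of $V$ inside $X$, and the consistency of the quotient measure near $V$. I would attempt it as follows. Any timelike geodesic from $I^-_X(V)$ to $I^+_X(V)=I^+_M(V)$ must enter $M$. Inside $M$, since $V$ is Cauchy there, it must cross $V$.

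For timelike completeness, $J^-_X(x)\cap V=J^-_M(x)\cap V$ for $x\in I^+(V)$, which is compact. On the past side, compactness of $V$ suffices.

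Once these are checked, \Cref{Thm:Secondsingular} yields \eqref{eq:tauVleq1/cCor1} and the incompleteness of $X$.
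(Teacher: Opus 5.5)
Your proposal is correct and follows essentially the same route as the paper. You reduce the statement to \Cref{Thm:Secondsingular} with $N=n+1$ by checking its hypotheses in $M$: $\TMCP^e(0,n+1)$ in both time orientations via McCann and Mondino--Suhr, empty global edge from the Cauchy property, $Q^-=V$ with quotient measure $\vol_{g_V}$ and $h(\alpha,0)=1$, and finite area via \Cref{rem:mm+=vol}. For the extension you use $I^+_X(V)=I^+_M(V)$, so the future halves of the rays, their normalized densities and hence $\theta_\alpha$ are unchanged.

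Your explicit check of empty global edge, timelike completeness and finite area of $V$ inside $X$ goes beyond the paper's ``repeat verbatim'' remark; achronality is only implicit in your claimed agreement of $\tau_X$ and $\tau_M$ on $V$. These checks are sound provided ``past extension'' is read, as the paper implicitly does, as $M$ sitting in $X$ as an open future set with $\mm\llcorner_M=\vol_g$.
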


\begin{proof}
Theorem \ref{Thm:SecondsingularSmooth} follows directly from Theorem \ref{Thm:Secondsingular}, once noticing that
\begin{itemize}
\item $(M,g)$ is, in particular, a timelike non-branching, globally hyperbolic, Lorentzian geodesic space satisfying the $\mathsf{TMCP}^{e}(0,n+1)$ condition, together with its causally-reverse structure (see \cite[Theorem A.1]{CaMo:20}, after \cite{McCann, MoSu});

\item since $V$ is a Cauchy hypersurface, it has empty global edge;

\item the smoothness of $V$ implies that all the rays $X_\alpha$, $\alpha\in V$, maximizing the time separation $\tau_V$ from $V$ can be extended across $V$, i.e.\ so that  $X_\alpha(0)=X_\alpha\cap V$ is an interior point of $X_\alpha$;

\item  the disintegration formula $$(\vol_g)|_{I^+(V)}=\int_V h_\alpha\, \vol_V(\di\alpha)$$ corresponds to choosing  $Q=V$ and $\qq=\vol_{g_V}$ in the disintegration theorem, which imply that $h_\alpha(0)=1$ for $\vol_{g_V}$-a.e.\ $\alpha\in V$;

\item The compactness of \( V \) implies that it is timelike complete and has finite \( n \)-dimensional volume. Moreover,
\[
\vol_g^+(V)=\vol_{g_V}(V),
\]
see Remark~\ref{rem:mm+=vol}.
\end{itemize}
The final claim on the non-smooth extension is a consequence of the following observations: if $X$ is a past extension of $(M,g)$, then $(M,g)$ is isomorphically embedded into $X$; moreover, denoting with $I^+_M$ and $I^+_X$ the future sets in $M$ and $X$ respectively, then  $I_M^+(V)=I^+_X(V)$ and $I_M^-(V)\subset I^-_X(V)$. One can then repeat verbatim the proof of Theorem \ref{Thm:Secondsingular} with the aforementioned simplifications.
\end{proof}

\subsection{Sharpness of Theorem \ref{Thm:Secondsingular}}

We briefly recall the notation for Lorentzian generalized cones, following
\cite{CalistiKettererSaemann}. Let $(Z,\sfd_Z,\mm_Z)$ be a metric measure
space and let $I\subset\mathbb R$ be an interval. Given a continuous
warping function $f:I\to[0,\infty)$, the Lorentzian generalized cone
\[
    {}^{-}I\times_f Z
\]
is the Lorentzian warped product with one-dimensional base $I$ endowed
with the negative metric $-dr^2$ and fibre $(Z,\sfd_Z)$. More precisely,
if $\gamma=(r,\beta)$ is an absolutely continuous causal curve, its
Lorentzian length is given by
\[
    L(\gamma)
    =
    \int
    \sqrt{\dot r^{\,2}-f(r)^2|\dot\beta|^2}\,dt,
\]
where $|\dot\beta|$ denotes the metric speed of $\beta$. A causal curve
is future-directed when $\dot r>0$ a.e., and the corresponding
time-separation is obtained by taking the supremum of the Lorentzian
lengths of future-directed causal curves joining two given points.

For $N>2$, the \emph{Minkowski $(N-1)$-cone} over
$(Z,\sfd_Z,\mm_Z)$ is the particular Lorentzian generalized cone
\begin{equation}\label{eq:Minkowski-cone-def}
    \mathcal M^{N-1}(Z)
    :=
    {}^{-}[0,\infty)\times_{\mathrm{id}}^{\,N-1} Z,
\end{equation}
corresponding to the warping function $f(r)=r$. Here the superscript
$N-1$ records the dimensional parameter of the fibre measure. The
canonical reference measure is
\begin{equation}\label{eq:Minkowski-cone-measure}
    \mm_{\mathcal M}
    =
    r^{N-1}\,dr \otimes \mm_Z.
\end{equation}
Since the warping function vanishes at $r=0$, all points of
$\{0\}\times Z$ are identified with a single point
$\mathsf o$, called the \emph{tip} of the cone. Thus, as a set,
\[
    \mathcal M^{N-1}(Z)
    =
    \bigl((0,\infty)\times Z\bigr)\cup\{\mathsf o\}.
\]
On the regular part $(0,\infty)\times Z$, the synthetic construction
is the nonsmooth analogue of the Lorentzian warped metric
\[
    -dr^2+r^2\,\sfd_Z^2.
\]
In particular, the radial curves
\[
    s\longmapsto (r+s,z)
\]
are unit-speed timelike geodesics, as long as $r+s>0$.

\begin{proposition}[Sharpness of Theorem~\ref{Thm:Secondsingular}]
\label{prop:sharpness-lorentzian-cones}
Let $N\in(2,\infty)$ and let $(Z,\sfd_Z,\mm_Z)$ be a compact $\CD(-(N-2),N-1)$ metric measure space with
$\supp\,\mm_Z=Z$.
Consider the Minkowski $(N-1)$-cone $\mathcal M^{N-1}(Z)$ endowed with the measure $\mm_{\mathcal M}$, defined above.  For $r_0>0$, set
\[
    V:=\{r_0\}\times Z.
\]
Then all the assumptions of Theorem~\ref{Thm:Secondsingular} are
satisfied and, adopting the normalization \eqref{eq:halpha0=1},
\begin{equation}\label{eq:theta-cone-sharp}
    \theta_\alpha
    =
    \frac{1}{N r_0^{N-1}}
    \qquad
    \text{for $\bar{\qq}$-a.e.\ $\alpha\in Q^-$}.
\end{equation}
Consequently, choosing
\[
    c:=\frac{1}{N r_0^{N-1}},
\]
equality holds in \eqref{eq:tauVleq1/c}, namely
\begin{equation}\label{eq:sharp-cone-equality}
    \sup_{x\in I^-(V)}|\tau_V(x)|
    =
    r_0
    =
    \left(\frac{1}{Nc}\right)^{\frac{1}{N-1}}.
\end{equation}
In particular, the estimate in
Theorem~\ref{Thm:Secondsingular} is sharp.
\end{proposition}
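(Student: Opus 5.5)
We verify the hypotheses of Theorem~\ref{Thm:Secondsingular} one by one, then compute the conditional densities explicitly. First, by \cite{CalistiKettererSaemann}, since $(Z,\sfd_Z,\mm_Z)$ is $\CD(-(N-2),N-1)$ and the warping function $f(r)=r$ satisfies $f''=0\cdot f$ with the compatible fibre curvature bound $-(N-2)=-(N-2)(f')^2$, the cone $\mathcal M^{N-1}(Z)$ is a globally hyperbolic Lorentzian geodesic space satisfying $\TMCP^e(0,N)$, together with its causally-reversed structure; timelike non-branching will also be taken from there (it follows from non-branching of the fibre, which holds for essentially non-branching $\CD$ spaces, and from the structure of the warped product). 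Next, we describe causality explicitly: for the slice $V=\{r_0\}\times Z$, $I^-(V)=\{r<r_0\}\cup\{\mathsf o\}$ (minus possibly nothing else), $I^+(V)=\{r>r_0\}$, and $V$ is achronal since a future-directed causal curve strictly increases $r$. By the reverse triangle inequality for the warped length, $\tau((r,z),(r',z'))\le r'-r$ with equality iff $z=z'$ (for $r\le r'$), so $\tau_V(r,z)=r-r_0$. Compactness of $Z$ gives timelike completeness, and since every timelike curve from $r<r_0$ to $r>r_0$ crosses $r=r_0$ by continuity of $r$, $V$ has empty global edge.

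Then we identify the transport rays: pairs realizing $\tau_V(z)-\tau_V(x)=\tau(x,z)$ are exactly those on a common radial line, so the rays are $X_z=\{(r,z):r>0\}$ indexed by $z\in Z$, all passing through the tip in the closure, so $Q^-\simeq Z$ and every intersection point $(r_0,z)$ is interior. Using $\mm_{\mathcal M}=r^{N-1}dr\otimes\mm_Z$ and uniqueness of disintegration, the conditional densities are $r^{N-1}$ up to a factor depending on $z$; parametrizing with $t=\tau_V=r-r_0$ and normalizing $h_\alpha(0)=1$ gives
\[
h_\alpha(t)=\Bigl(1+\frac{t}{r_0}\Bigr)^{N-1},\qquad \bar\qq=r_0^{N-1}\mm_Z .
\]
Consequently $\mm^+(V)=r_0^{N-1}\mm_Z(Z)<\infty$, and $\theta_\alpha=\lim_{t\to\infty}(1+t/r_0)^{N-1}/(Nt^{N-1})=1/(Nr_0^{N-1})$, proving \eqref{eq:theta-cone-sharp}.

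Finally, with $c=1/(Nr_0^{N-1})$, $(1/(Nc))^{1/(N-1)}=r_0$, and $\sup_{I^-(V)}|\tau_V|=\sup_{r<r_0}(r_0-r)=r_0$, approached as $r\to0$ (attained at the tip, which lies in $I^-(V)$ with $\tau(\mathsf o,(r_0,z))=r_0$). Thus equality holds in \eqref{eq:tauVleq1/c}. The main obstacle is the first step: checking precisely that the cited results of \cite{CalistiKettererSaemann} deliver $\TMCP^e(0,N)$ in both time orientations and timelike non-branching (including near the tip), and rigorously justifying $\tau((r,z),(r',z'))<r'-r$ for $z\neq z'$, which I would do by comparing the integrand $\sqrt{\dot r^2-r^2|\dot\beta|^2}\le\dot r$ with strict inequality on a set of positive measure whenever $\beta$ is nonconstant.
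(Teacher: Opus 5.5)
Your proposal follows the paper's proof essentially step by step. Both arguments take the causal and curvature properties of the cone from \cite{CalistiKettererSaemann}, obtain $\tau_V=r-r_0$ from $\sqrt{\dot r^{2}-r^{2}|\dot\beta|^{2}}\le\dot r$, use radial transport rays with normalized density $(1+t/r_0)^{N-1}$ and $\bar{\qq}=r_0^{N-1}\mm_Z$, compute $\mm^+(V)$ and $\theta_\alpha$ directly, and realize the supremum $r_0$ at the tip. Your strictness argument for $z\neq z'$, once combined with existence of maximizers in the geodesic space, even justifies the identification of the rays, which the paper only asserts.

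The one step that is wrong as written is your justification of timelike non-branching. The hypothesis $\CD(-(N-2),N-1)$ gives neither essential non-branching nor non-branching of $Z$; for instance, $\R^n$ with the $\ell^\infty$ norm is $\CD(0,n)$ and branching. Moreover, essential non-branching does not by itself yield non-branching: that is a theorem of Deng for $\mathsf{RCD}$ spaces, not for general $\CD$ spaces.

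The paper does not derive this property either. It simply writes ``since $Z$ is non-branching'' to invoke \cite[Remark~6.2]{CalistiKettererSaemann}. That same non-branching is also what, through timelike $p$-essential non-branching and \cite[Remark~2.16]{CalistiKettererSaemann}, turns the output of \cite[Theorem~5.2]{CalistiKettererSaemann} into the entropic $\TMCP^e(0,N)$ you take for granted. So non-branching of $Z$ should be stated as an additional assumption (automatic if $Z$ is $\mathsf{RCD}$), not deduced from the $\CD$ condition. With that assumption made explicit, your argument is complete.
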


\begin{proof}
 The fact that the Minkowski cone satisfies
\( \TMCP^e(0,N)\) follows from \cite[Theorem~5.2]{CalistiKettererSaemann} with dimension
$N-1$, warping function
\[
    f(r)=r,
\]
and parameters
\[
    \kappa=0,
    \qquad
    \eta=-1.
\]
Indeed, since $Z$ is non-branching, the cone $X$ is timelike
non-branching by
\cite[Remark~6.2]{CalistiKettererSaemann}. In particular, it is
timelike $p$-essentially non-branching, and hence
\cite[Remark~2.16]{CalistiKettererSaemann} yields the entropic
condition
\(  \TMCP^e(0,N). \)
The same conclusion holds for the causally-reversed structure.
Finally, $X$ is a globally hyperbolic Lorentzian geodesic space; see,
for instance, \cite[Theorem~3.17]{CalistiKettererSaemann}.

We next discuss the slice $V=\{r_0\}\times Z$. It is compact and
achronal, and therefore it is timelike complete. Moreover, it has
empty global edge. Indeed, the radial component of every
future-directed timelike curve is strictly increasing, and hence
every timelike curve joining a point in the chronological past of
$V$ to a point in its chronological future must intersect the level
set $\{r=r_0\}$.

The area of $V$ is finite. More precisely, directly from the
definition of the future Minkowski content,
\begin{align*}
    \mm_{\mathcal M}^+(V)
    &=
    \lim_{\varepsilon\downarrow0}
    \frac{1}{\varepsilon}
    \mm_{\mathcal M}\bigl(\{r_0<r<r_0+\varepsilon\}\bigr)\\
    &=
    \lim_{\varepsilon\downarrow0}
    \frac{\mm_Z(Z)}{\varepsilon}
    \int_{r_0}^{r_0+\varepsilon}r^{N-1}\,dr\\
    &=
    r_0^{N-1}\mm_Z(Z)
    <\infty.
\end{align*}

We now compute the signed time-separation from $V$. If
$\gamma=(r,\beta)$ is any future-directed causal curve, then
\[
    L(\gamma)
    =
    \int
    \sqrt{\dot r^{\,2}-r^2|\dot\beta|^2}\,dt
    \leq
    \int \dot r\,dt.
\]
On the other hand, equality is attained by radial curves, namely
curves with constant $Z$-component. It follows that
\begin{equation}\label{eq:tauV-cone}
    \tau_V((r,z))=r-r_0.
\end{equation}
In particular,
\[
    I^-(V)=\{(r,z):0\leq r<r_0\},
\]
up to the usual identification at the tip, and therefore
\begin{equation}\label{eq:past-distance-cone}
    \sup_{x\in I^-(V)}|\tau_V(x)|
    =
    \sup_{0\leq r<r_0}(r_0-r)
    =
    r_0.
\end{equation}

It remains to compute the one-dimensional asymptotic expansion.
The transport rays associated with $\tau_V$ are precisely the radial
curves
\[
    s\longmapsto (r_0+s,z),
    \qquad
    s\in(-r_0,\infty),
\]
indexed by $z\in Z$. In these coordinates, the reference measure
disintegrates as
\[
    \mm_{\mathcal M}
    =
    (r_0+s)^{N-1}\,ds\otimes\mm_Z.
\]
After imposing the normalization \eqref{eq:halpha0=1}, we may write
\[
    d\bar{\qq}(z)
    :=
    r_0^{N-1}\,d\mm_Z(z),
    \qquad
    h_z(s)
    :=
    \left(1+\frac{s}{r_0}\right)^{N-1},
    \qquad
    s\in(-r_0,\infty).
\]
In particular,
\[
    h_z(0)=1
\]
for every $z\in Z$. Recalling the definition of the
one-dimensional asymptotic area ratio, we obtain
\begin{align*}
    \theta_z
    &=
    \lim_{R\to+\infty}
    \frac{h_z(R)}{N R^{N-1}}\\
    &=
    \lim_{R\to+\infty}
    \frac{(r_0+R)^{N-1}}
         {N r_0^{N-1}R^{N-1}}
    =
    \frac{1}{N r_0^{N-1}}.
\end{align*}
Thus \eqref{eq:thetaalphageqc} holds with equality for
\[
    c=\frac{1}{N r_0^{N-1}}.
\]
Consequently,
\[
    \left(\frac{1}{Nc}\right)^{\frac{1}{N-1}}
    =
    \left(r_0^{N-1}\right)^{\frac{1}{N-1}}
    =
    r_0.
\]
Combining this identity with \eqref{eq:past-distance-cone} gives
\eqref{eq:sharp-cone-equality}, and proves the sharpness of
Theorem~\ref{Thm:Secondsingular}.
\end{proof}

\begin{remark}
The restriction $N>2$ above comes from the range of
\cite[Theorem~5.2]{CalistiKettererSaemann}. Indeed, in order to
construct a cone satisfying $\TMCP^e(0,N)$, that theorem is applied
with fibre dimension parameter $N-1$, which is required to belong to
$(1,\infty)$. For the Minkowski warping function $f(r)=r$, the
corresponding sharp curvature condition on the fibre is
\(
    \CD(-(N-2),N-1).
\)
\end{remark}

\subsection{Rigidity properties}

\begin{theorem}[Geometric rigidity in the  smooth setting]
\label{Thm:secondsingularSmoothRigidity}
Let $(M,g)$ be a connected, $(n+1)$-dimensional, globally
hyperbolic, smooth Lorentzian manifold and let $V\subset M$ be a
smooth, compact, connected Cauchy hypersurface. Assume that
\begin{equation}\label{eq:SEC-smooth-rigidity}
    \Ric_g(v,v)\geq 0
    \qquad\text{for every timelike vector $v\in TM$}.
\end{equation}
Let $h_\alpha$ be the densities in the disintegration
\eqref{eq:DisintVolvSmooth}, normalized by $h_\alpha(0)=1$, and let
$\theta_\alpha$ be the corresponding one-dimensional asymptotic area
 . Suppose that, for some $c>0$,
\begin{equation}\label{eq:theta-lower-rigidity}
    \theta_\alpha\geq c
    \qquad\text{for $\vol_V$-a.e. $\alpha\in V$}.
\end{equation}
Set
\begin{equation}\label{eq:def-T-rigidity}
    T:=\left(\frac{1}{(n+1)c}\right)^{\frac1n}.
\end{equation}
If equality holds in the estimate of
\Cref{Thm:SecondsingularSmooth}, namely
\begin{equation}\label{eq:distance-equality-rigidity}
    \sup_{x\in I^-(V)}|\tau_V(x)|=T,
\end{equation}
then $(M,g)$ is isometric to the Lorentzian cone
\begin{equation}\label{eq:global-cone-rigidity}
    \left((0,+\infty)\times V,
    -\di r^2+\frac{r^2}{T^2}g_V\right),
\end{equation}
where $g_V=g|_{TV}$ and $V$ is identified with the slice
$\{T\}\times V$. In particular, for $$g_Z:=T^{-2}g_V,$$ one has
\begin{equation}\label{eq:fibre-Ricci-rigidity}
    \Ric_{g_Z}\geq -(n-1)g_Z.
\end{equation}
\end{theorem}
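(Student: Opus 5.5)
Our plan follows the outline sketched in the Introduction. We work in smooth normal coordinates along the $V$-geodesics: for $\alpha\in V$ let $\gamma_\alpha(t)=\exp_\alpha(t\,\nu(\alpha))$, with $\nu$ the future unit normal. On the region where $\gamma_\alpha$ is $\tau_V$-maximizing, $h_\alpha(t)$ is the Jacobian of the normal exponential map, and $h_\alpha'(0)/h_\alpha(0)=H_V(\alpha)$, the mean curvature (sign convention: expansion of the future congruence).

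\textbf{Step 1: mean-curvature bound.} The one-dimensional $\MCP(0,n+1)$ inequality \eqref{E:MCP0N1d} gives $h_\alpha(t)\le (t-a_\alpha)^n/(-a_\alpha)^n$ for $t>0$, where $a_\alpha$ is the past endpoint of $X_\alpha$. A sharper bound comes from the Riccati inequality. Set $\psi=h_\alpha^{1/n}$. The strong energy condition gives $\psi''\le 0$ on the whole maximizing interval, so $\psi(t)\le 1+tH_V(\alpha)/n$ for $t\ge0$. Then $\theta_\alpha=\lim h_\alpha(t)/((n+1)t^n)\le (H_V/n)^n/(n+1)$. Combined with $\theta_\alpha\ge c$ and continuity of $H_V$, this yields $H_V\ge n\,((n+1)c)^{1/n}=n/T$ everywhere on $V$.

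\textbf{Step 2: a maximal ray and splitting-type rigidity.} By \eqref{eq:distance-equality-rigidity} there are points $x_k\in I^-(V)$ with $-\tau_V(x_k)\to T$. Global hyperbolicity and compactness of $V$ produce maximizing segments from $x_k$ to $V$ meeting $V$ orthogonally. A limit of these is a past-directed unit-speed geodesic $\sigma$ from some $\alpha_0\in V$, maximizing $\tau_V$ on $[-T,0)$ (it cannot be longer by \Cref{Thm:SecondsingularSmooth}). Along $\sigma$, apply the Raychaudhuri comparison backwards: $H_V\ge n/T$ forces the mean curvature of the level sets along $\sigma$ to satisfy $\ge n/(T+t)$ for $t\in(-T,0]$, and the congruence focuses no later than time $-T$. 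The maximal-distance rigidity theorem of \cite{GrafSplitting} (mean curvature $\ge n/T$ and a maximizing normal geodesic of length exactly $T$) then yields that the past $\tau_V$-development $I^-(V)$ is isometric to $((0,T]\times V,\,-dr^2+(r/T)^2g_V)$. Equality propagates to the whole of $V$ by connectedness; in the Graf argument, the set of $\alpha$ whose ray attains length $T$ is open and closed. In particular $V$ is totally umbilic with second fundamental form $g_V/T$.

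\textbf{Step 3: future half.} On the future side, umbilicity gives initial data $\psi'(0)=1/T$, hence $\psi(t)\le 1+t/T$. The asymptotic constraint reads $\lim\psi(t)/t=((n+1)\theta_\alpha)^{1/n}\ge 1/T$. Concavity of $\psi$ with $\psi(0)=1$, $\psi'(0)=1/T$ and asymptotic slope $\ge 1/T$ forces $\psi(t)\equiv 1+t/T$, and no cut point can occur. Equality in Raychaudhuri then gives vanishing shear and $\Ric(\dot\gamma,\dot\gamma)=0$, so $g=-dr^2+(r/T)^2g_V$ on $I^+(V)$ as well. Since $V$ is Cauchy, this covers $M$.

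\textbf{Step 4: fibre curvature.} The Gauss equation for the warped product gives $\Ric_g(\partial_r+w,\partial_r+w)=\Ric_{g_Z}(\hat w,\hat w)+(n-1)g_Z(\hat w,\hat w)$ for suitably rescaled spacelike $w$. Timelike nonnegativity, passed to the null limit, yields \eqref{eq:fibre-Ricci-rigidity}.

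The main obstacle is Step 2. One must verify that the rigidity theorem applies exactly with a past-directed ray of length $T$ ending at a singular focal time, and one must control the limit geodesic, in particular that it stays timelike and orthogonal. If needed, we would instead argue directly: the Riccati bound forces $h_\alpha\le(1+t/T)^n$ on the past side along $\sigma$, with equality forced by maximality. We would then obtain openness of the equality set from the strong maximum principle for $H$ of the level sets of $\tau_V$.
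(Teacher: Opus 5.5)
Your proposal is correct and follows essentially the same route as the paper: concavity of $h_\alpha^{1/n}$ gives $H_V\ge n/T$; a limiting past-directed $V$-ray of length $T$, fed into Graf's maximal-distance rigidity theorem, gives the cone structure on $I^-(V)$ and hence $H_V=n/T$; the concavity squeeze on the future side then forces equality in the traced Riccati equation (vanishing shear, $\Ric_g(\dot\gamma,\dot\gamma)=0$); and the warped-product Ricci identities give the fibre bound. The only minor difference is that you read off the future cone metric directly from $B_s=\frac{1}{T+s}\Id$ and the absence of cut points (which you assert briefly), whereas the paper closes this step with the volume-rigidity statement \cite[Proposition~6.3]{GrafSplitting}.
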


\begin{proof}
We divide the proof into three steps.

\emph{Step 1: the asymptotic expansion gives a mean-curvature
bound.}
Let $\nu$ denote the future-pointing unit normal to $V$, and let
$H_V$ be the mean curvature of $V$ with respect to $\nu$, with the
sign convention that the slices of the cone in
\eqref{eq:global-cone-rigidity} have positive mean curvature. On every
future normal ray on which the disintegration is defined, put
\[
    u_\alpha(s):=h_\alpha(s)^{1/n}.
\]
The traced Riccati equation and
\eqref{eq:SEC-smooth-rigidity} imply that $u_\alpha$ is concave.
Moreover,
\begin{equation}\label{eq:ualpha0}
    u_\alpha(0)=1,
    \qquad
    u_\alpha'(0)=\frac{H_V(\alpha)}{n}.
\end{equation}
Since
\begin{equation}\label{eq:ualphaInfty}
    \lim_{R\to+\infty}\frac{u_\alpha(R)}{R}
    =\bigl((n+1)\theta_\alpha\bigr)^{1/n},
\end{equation}
concavity and \eqref{eq:theta-lower-rigidity} yield
\begin{equation}\label{eq:H-lower-rigidity}
    \frac{H_V(\alpha)}{n}
    \geq \bigl((n+1)\theta_\alpha\bigr)^{1/n}
    \geq \frac1T
\end{equation}
for $\vol_V$-a.e. $\alpha\in V$. The smoothness of $V$ and the
continuity of $H_V$ extend the last inequality to every point of
$V$. Hence
\begin{equation}\label{eq:H-global-rigidity}
    H_V\geq \frac nT
    \qquad\text{on $V$}.
\end{equation}

\emph{Step 2: rigidity of the chronological past.}
We first observe that the equality in
\eqref{eq:distance-equality-rigidity} produces a past-directed
$V$-ray of length $T$. Indeed, choose $x_j\in I^-(V)$ such that
$-\tau_V(x_j)\to T$. Global hyperbolicity and the compactness of $V$
give a future-directed maximizing geodesic from $x_j$ to $V$. Read
backwards from $V$, it is a past-directed unit-speed  geodesic
\[
    \gamma_j:[0,s_j]\longrightarrow M,
    \qquad s_j=-\tau_V(x_j)\longrightarrow T,
\]
which maximizes the time separation from $V$ on each of its initial
segments. Writing $\gamma_j(0)=\alpha_j\in V$, compactness gives,
after passing to a subsequence, $\alpha_j\to\alpha\in V$. Continuous
dependence of the geodesic flow then yields a past-directed
unit-speed geodesic
\[
    \gamma:[0,T)\longrightarrow M,
    \qquad -\tau_V(\gamma(t))=t,
\]
where the maximizing property follows by continuity of the time
separation. Thus $\gamma$ is a $V$-ray of maximal length $T$.

Reverse now the time orientation. The timelike Ricci lower bound is
unchanged, while \eqref{eq:H-global-rigidity} becomes the upper bound
$H_V\leq-n/T$. Therefore
\cite[Theorem~5.12]{GrafSplitting} applies with
\[
    \kappa=0,
    \qquad \beta=-\frac nT,
    \qquad b_{\kappa,\beta}=T.
\]
The existence of the maximal $V$-ray obtained above implies
\begin{equation}\label{eq:past-cone-rigidity}
    (I^-(V),g)
    \simeq
    \left((0,T)\times V,
    -\di t^2+\left(1-\frac tT\right)^2g_V\right).
\end{equation}
In particular, the future shape operator of $V$ is
\begin{equation}\label{eq:shape-at-V-rigidity}
    B_0=\frac1T\Id,
    \qquad H_V=\frac nT.
\end{equation}

\emph{Step 3: rigidity of the chronological future.}
Combining \eqref{eq:shape-at-V-rigidity} with \eqref{eq:ualpha0}, \eqref{eq:ualphaInfty} and the concavity of $u_\alpha$, we infer that for $\vol_V$-a.e. $\alpha$ and every $s>0$ in the
corresponding future ray,
\begin{equation}\label{eq:u-squeeze-rigidity}
    1+\bigl((n+1)\theta_\alpha\bigr)^{1/n}s
    \leq u_\alpha(s)
    \leq 1+\frac sT.
\end{equation}
Here the first inequality follows by \eqref{eq:ualphaInfty} and letting the second endpoint tend
to $+\infty$ in the secant-slope inequality for the concave function
$u_\alpha$, while the second one is the tangent-line inequality at
$s=0$ combined with \eqref{eq:ualpha0} and \eqref{eq:shape-at-V-rigidity}. In view of \eqref{eq:theta-lower-rigidity} and
\eqref{eq:def-T-rigidity}, both inequalities in
\eqref{eq:u-squeeze-rigidity} are equalities. Consequently,
\begin{equation}\label{eq:density-equality-rigidity}
    \theta_\alpha=c,
    \qquad
    h_\alpha(s)=\left(1+\frac sT\right)^n
\end{equation}
for $\vol_V$-a.e. $\alpha\in V$.

Equality in the traced Riccati inequality forces equality both in
the Cauchy--Schwarz estimate for the second fundamental form and in
the timelike Ricci bound. Hence the shear vanishes and
\begin{equation}\label{eq:riccati-equality-rigidity}
    B_s=\frac1{T+s}\Id,
    \qquad
    \Ric_g(\dot\gamma_\alpha,\dot\gamma_\alpha)=0.
\end{equation}
It follows that the induced metrics $g_s$ on the regular level sets
of $\tau_V$ satisfy
\[
    \partial_s g_s=2B_s^\flat,
    \qquad
    g_s=\left(\frac{T+s}{T}\right)^2g_V.
\]
Finally, \eqref{eq:density-equality-rigidity} gives the model volume
over every compact subset of $V$. The volume-rigidity statement
\cite[Proposition~6.3]{GrafSplitting} therefore  yields
\begin{equation}\label{eq:future-cone-rigidity}
    (I^+(V),g)
    \simeq
    \left((0,+\infty)\times V,
    -\di s^2+\left(1+\frac sT\right)^2g_V\right).
\end{equation}
Combining \eqref{eq:past-cone-rigidity} and
\eqref{eq:future-cone-rigidity}, and setting $r=T+\tau_V$, proves
\eqref{eq:global-cone-rigidity}.

The warped-product Ricci identities, together with
\eqref{eq:SEC-smooth-rigidity}, imply
$\Ric_{g_Z}\geq-(n-1)g_Z$ for $g_Z=T^{-2}g_V$, proving
\eqref{eq:fibre-Ricci-rigidity}.
\end{proof}

In the next proposition, we establish several rigidity statements in the non-smooth setting. The third statement is inspired by Ohta's rigidity result for the Bonnet--Myers theorem in $\MCP(N-1,N)$ metric measure spaces; see \cite[Theorem~5.5]{OhtaJLMS}.

\begin{proposition}[Raywise and measure rigidity]
\label{prop:raywise-measure-rigidity}
Let $(X,\sfd,\mm,\ll,\leq,\tau)$ and $V\subset X$ satisfy the assumptions
of \Cref{Thm:Secondsingular}. Adopt the normalization
\eqref{eq:halpha0=1}, and assume that
\[
    \theta_\alpha\geq c>0
    \qquad\text{for $\bar{\qq}$-a.e.\ $\alpha\in Q^-$}.
\]
Set
\begin{equation}\label{eq:defT-rigidity}
    T:=\left(\frac{1}{Nc}\right)^{\frac1{N-1}}.
\end{equation}
For $\bar{\qq}$-a.e.\ $\alpha\in Q^-$, write
\[
    X_\alpha\simeq(a_\alpha,+\infty),
    \qquad
    \mm_\alpha=h_\alpha(t)\,\mathcal L^1(dt),
\]
where $t=\tau_V$ and $h_\alpha(0)=1$.

Then the following assertions hold.

\begin{enumerate}
\item[\rm (i)]
Assume that, for some $\alpha\in Q^-$,
\begin{equation}\label{eq:maximal-ray-rigidity}
    a_\alpha=-T.
\end{equation}
Then
\begin{equation}\label{eq:theta-equality-ray}
    \theta_\alpha=c
\end{equation}
and
\begin{equation}\label{eq:future-cone-density}
    h_\alpha(t)
    =
    \left(1+\frac{t}{T}\right)^{N-1},
    \qquad t\in[0,+\infty).
\end{equation}
Moreover, on the past portion of the ray one has
\begin{equation}\label{eq:past-cone-lower-bound}
    \left(1+\frac{t}{T}\right)^{N-1}
    \leq h_\alpha(t)\leq 1,
    \qquad t\in(-T,0].
\end{equation}

\item[\rm (ii)]
Assume that \eqref{eq:maximal-ray-rigidity} holds for
$\bar{\qq}$-a.e.\ $\alpha\in Q^-$ and that
\begin{equation}\label{eq:past-volume-rigidity-assumption}
    \mm(I^-(V))
    =
    \frac{T}{N}\,\bar{\qq}(Q^-).
\end{equation}
Then
\begin{equation}\label{eq:full-cone-density}
    h_\alpha(t)
    =
    \left(1+\frac{t}{T}\right)^{N-1},
    \qquad
    t\in(-T,+\infty),
\end{equation}
for $\bar{\qq}$-a.e.\ $\alpha\in Q^-$.

Consequently, after the change of variable
\[
    r=T+t,
\]
the disintegration of $\mm$ along the transport rays takes the form
\begin{equation}\label{eq:cone-disintegration}
    \mm
    =
    \int_{Q^-}
    r^{N-1}\,dr\,\nu(d\alpha),
    \qquad
    \nu:=T^{-(N-1)}\bar{\qq}\llcorner_{Q^-}.
\end{equation}

\item[\rm (iii)]
Assume, in addition to the assumptions in {\rm (ii)}, the following
Ohta-type uniqueness condition. There exists a point $\mathsf o\in X$
such that:
\begin{itemize}
\item the closure of every ray $X_\alpha$, $\alpha\in Q^-$, has past
endpoint $\mathsf o$;
\item every point of $X\setminus\{\mathsf o\}$ belongs to a unique
maximal transport ray associated with $\tau_V$;
\item the family of such rays is precompact, on every bounded
$t$-interval, in the compact-open topology.
\end{itemize}
Then there exists a topological space $Y$, naturally identified with
the family of transport rays, such that $(X,\mm)$ is a measure cone over
$(Y,\nu)$. More precisely, setting
\[
    \mathcal C(Y)
    :=
    Y\times[0,+\infty)\big/\bigl(Y\times\{0\}\bigr),
\]
there exists a homeomorphism
\[
    \Psi:\mathcal C(Y)\longrightarrow X
\]
sending the tip of $\mathcal C(Y)$ to $\mathsf o$ and satisfying
\begin{equation}\label{eq:measure-cone-rigidity}
    \Psi_\sharp
    \bigl(r^{N-1}\,dr\otimes\nu\bigr)
    =
    \mm.
\end{equation}
Thus $(X,\mm)$ is a cone as a topological measure space.
\end{enumerate}
\end{proposition}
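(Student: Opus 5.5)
The whole argument is one-dimensional and runs on the $\MCP(0,N)$ inequality \eqref{E:MCP0N1d} along each ray, in the form \eqref{eq:1d-MCP} used in \Cref{lem:MCP-asymptotic-bound}. The model profile is $h_T(t):=(1+t/T)^{N-1}$.

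\emph{Part (i).} Let $-s\in(-T,0)$ be a point of $X_\alpha$ and apply \eqref{eq:MCP-0-R} with base point $0$. Since $N\theta_\alpha s^{N-1}\le h_\alpha(0)=1$ for every $s<T$, we get $\theta_\alpha\le 1/(NT^{N-1})=c$. Together with $\theta_\alpha\ge c$ this gives \eqref{eq:theta-equality-ray}.

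For the future density we use the two-sided bound \eqref{E:MCP0N1d} with $a=-T$ and $b=+\infty$. The upper bound with $x_0=0$ reads $h_\alpha(t)\le \bigl((t+T)/T\bigr)^{N-1}$ for $t\ge0$. For the lower bound we use monotonicity: $t\mapsto h_\alpha(t)/(t+T)^{N-1}$ is non-increasing on $(-T,\infty)$. This is the Bishop--Gromov statement of \Cref{lem:MCP-asymptotic-bound}, recentred at the past endpoint $-T$. Its limit as $t\to\infty$ equals $N\theta_\alpha=NcT^{0}\cdot T^{-(N-1)}\cdot T^{N-1}$, that is, $T^{-(N-1)}$, using $\theta_\alpha=c=1/(NT^{N-1})$. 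Hence $h_\alpha(t)\ge (t+T)^{N-1}T^{-(N-1)}$ for every $t$. This proves \eqref{eq:future-cone-density} on $[0,\infty)$ and also the lower bound in \eqref{eq:past-cone-lower-bound} on the past portion.

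The upper bound $h_\alpha\le1$ on $(-T,0]$ comes from the other inequality in \eqref{E:MCP0N1d}. With $b=+\infty$ the ratio $\bigl((b-t_1)/(b-t_0)\bigr)^{N-1}$ tends to $1$, so $h_\alpha(t_1)\ge h_\alpha(t_0)$ for $t_0<t_1$. Thus $h_\alpha$ is non-decreasing, and $h_\alpha(t)\le h_\alpha(0)=1$ for $t\le0$. The only care needed is to justify the limit $b\to\infty$, which follows because the inequality holds for every finite subinterval $(a,b')$ of the ray, since MCP restricts to subintervals.

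\emph{Part (ii).} Disintegrate the past volume via \eqref{E:disintegration} and the renormalized measure \eqref{eq:halpha(0)=1}:
\[
\mm(I^-(V))=\int_{Q^-}\int_{-T}^0 h_\alpha\,dt\,\bar\qq(d\alpha).
\]
Here we use that, up to null sets, $I^-(V)$ is covered by the past halves of rays indexed by $Q^-$, as in the proof of \Cref{Thm:Secondsingular}. By (i) we have $h_\alpha\ge h_T$ on $(-T,0]$, and $\int_{-T}^0 h_T\,dt=T/N$. The assumption \eqref{eq:past-volume-rigidity-assumption} therefore forces $\int_{-T}^0(h_\alpha-h_T)\,dt=0$ for a.e.\ $\alpha$. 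Since $h_\alpha-h_T\ge0$ and is continuous, $h_\alpha=h_T$. Substituting $r=T+t$ gives $h_\alpha\,dt=T^{-(N-1)}r^{N-1}dr$, which is \eqref{eq:cone-disintegration}.

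\emph{Part (iii).} Let $Y$ be the set of maximal rays indexed by $Q^-$, with the topology of compact-open convergence of their $r$-parametrizations, and put the measure $\nu$ on $Y$. Define $\Psi([\alpha,r]):=X_\alpha(r-T)$ for $r>0$, and send the tip to $\mathsf o$. The closure hypothesis makes $\Psi$ well defined at $r=0$. Uniqueness of the ray through each point makes $\Psi$ bijective. Continuity of $\Psi$ follows from the definition of the topology on $Y$ together with the equicontinuity of timelike geodesics with bounded $\tau$-length. Precompactness and properness of $\sfd$ then give continuity of $\Psi^{-1}$, via closed graph and compactness on bounded $r$-slabs. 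Finally, the measure identity \eqref{eq:measure-cone-rigidity} is exactly \eqref{eq:cone-disintegration}, since $\{\mathsf o\}$ has measure zero.

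The main obstacle is the homeomorphism in (iii), specifically continuity of $\Psi^{-1}$ near the tip and the Borel identification of $Y$ with $Q^-$. I would handle this by showing that limits of rays are rays (closedness of $\Gamma_V$ and continuity of $\tau$) and then invoking uniqueness.
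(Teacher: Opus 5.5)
Your proposal is correct and follows the paper's proof essentially step by step. The only difference is in (i): the paper obtains $\theta_\alpha=c$ and the future cone profile at once by squeezing the non-increasing ratio $h_\alpha(t)/(T+t)^{N-1}$ between its value $Nc$ at $t=0$ and its limit $N\theta_\alpha\ge Nc$, whereas you first extract $\theta_\alpha\le c$ from \eqref{eq:MCP-0-R} and then use the same monotone ratio, a purely cosmetic reordering. Parts (ii) and (iii) coincide with the paper's arguments: in (ii), the integrated lower bound plus continuity of the density; in (iii), the Ohta-type precompactness and uniqueness argument for the continuity of $\Psi^{-1}$, followed by the measure identity read off from \eqref{eq:cone-disintegration}.
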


\begin{proof}
We start with the raywise statement. By the localization of the
$\mathsf{TMCP}^e(0,N)$ condition, for $\bar{\qq}$-a.e.\
$\alpha\in Q^-$ the one-dimensional metric measure space
\[
    \bigl((a_\alpha,+\infty),|\cdot|,
    h_\alpha\,\mathcal L^1\bigr)
\]
satisfies $\mathsf{MCP}(0,N)$. In particular, the one-dimensional
$\mathsf{MCP}(0,N)$ comparison yields
\begin{equation}\label{eq:MCP-two-sided-rigidity}
    \left(\frac{b-t_1}{b-t_0}\right)^{N-1}
    \leq
    \frac{h_\alpha(t_1)}{h_\alpha(t_0)}
    \leq
    \left(\frac{t_1-a}{t_0-a}\right)^{N-1},
\end{equation}
whenever
\[
    a<t_0<t_1<b
\]
belong to the ray.

Assume first that $a_\alpha=-T$. Letting $a\downarrow-T$ in the
right-hand inequality of \eqref{eq:MCP-two-sided-rigidity}, we infer
that
\begin{equation}\label{eq:monotone-ratio-cone}
    (-T,+\infty)\ni t
    \longmapsto
    \frac{h_\alpha(t)}{(T+t)^{N-1}}
\end{equation}
is non-increasing. Since $h_\alpha(0)=1$, it follows that
\begin{equation}\label{eq:ratio-at-zero}
    \frac{h_\alpha(0)}{T^{N-1}}
    =
    \frac1{T^{N-1}}
    =
    Nc.
\end{equation}
On the other hand,
\begin{align*}
    \lim_{t\to+\infty}
    \frac{h_\alpha(t)}{(T+t)^{N-1}}
    &=
    \lim_{t\to+\infty}
    \frac{h_\alpha(t)}{t^{N-1}}\\
    &=
    N\theta_\alpha
    \geq Nc.
\end{align*}
Since the function in \eqref{eq:monotone-ratio-cone} is
non-increasing, its limit at $+\infty$ cannot exceed its value at
$t=0$. Hence all the previous inequalities are equalities. In
particular,
\[
    \theta_\alpha=c
\]
and
\[
    \frac{h_\alpha(t)}{(T+t)^{N-1}}
    =
    \frac1{T^{N-1}},
    \qquad t\geq0,
\]
which proves \eqref{eq:future-cone-density}.

For $t\in(-T,0]$, the monotonicity
\eqref{eq:monotone-ratio-cone} gives
\[
    \frac{h_\alpha(t)}{(T+t)^{N-1}}
    \geq
    \frac{h_\alpha(0)}{T^{N-1}},
\]
and therefore
\[
    h_\alpha(t)
    \geq
    \left(1+\frac{t}{T}\right)^{N-1}.
\]
Moreover, letting $b\to+\infty$ in the left-hand inequality of
\eqref{eq:MCP-two-sided-rigidity}, we obtain that $h_\alpha$ is
non-decreasing. Since $h_\alpha(0)=1$, this gives
\[
    h_\alpha(t)\leq1,
    \qquad t\in(-T,0].
\]
This proves \eqref{eq:past-cone-lower-bound} and completes the proof of
{\rm (i)}.

We now prove {\rm (ii)}. By assumption,
$a_\alpha=-T$ for $\bar{\qq}$-a.e.\ $\alpha\in Q^-$. Hence
\eqref{eq:past-cone-lower-bound} gives
\begin{align}
    \int_{-T}^{0}h_\alpha(t)\,dt
    &\geq
    \int_{-T}^{0}
    \left(1+\frac{t}{T}\right)^{N-1}dt \notag\\
    &=
    \frac{T}{N}
    \label{eq:raywise-past-volume-lower}
\end{align}
for $\bar{\qq}$-a.e.\ $\alpha\in Q^-$. Integrating
\eqref{eq:raywise-past-volume-lower} with respect to $\bar{\qq}$ and
using the disintegration formula yields
\begin{equation}\label{eq:global-past-volume-lower}
    \mm(I^-(V))
    \geq
    \frac{T}{N}\bar{\qq}(Q^-).
\end{equation}
Under the equality assumption
\eqref{eq:past-volume-rigidity-assumption}, equality must therefore
hold in \eqref{eq:raywise-past-volume-lower} for
$\bar{\qq}$-a.e.\ $\alpha$. Since
\[
    h_\alpha(t)
    -
    \left(1+\frac{t}{T}\right)^{N-1}
    \geq0
\]
on $(-T,0)$, we conclude that
\[
    h_\alpha(t)
    =
    \left(1+\frac{t}{T}\right)^{N-1}
\]
for $\mathcal L^1$-a.e.\ $t\in(-T,0)$. Recalling that the canonical
representative of an $\mathsf{MCP}(0,N)$ density is continuous in the
interior of its domain, the identity holds for every
$t\in(-T,0]$. Together with \eqref{eq:future-cone-density}, this proves
\eqref{eq:full-cone-density}.

Changing variables according to $r=T+t$, we then have
\[
    h_\alpha(t)\,dt\,\bar{\qq}(d\alpha)
    =
    \frac{r^{N-1}}{T^{N-1}}\,dr\,
    \bar{\qq}(d\alpha)
    =
    r^{N-1}\,dr\,\nu(d\alpha),
\]
which proves \eqref{eq:cone-disintegration}.

We finally prove {\rm (iii)}. Let $Y$ denote the family of maximal
transport rays, endowed with the compact-open topology, and define
\[
    \Psi:Y\times(0,+\infty)\longrightarrow X\setminus\{\mathsf o\}
\]
by
\[
    \Psi(\alpha,r)
    :=
    X_\alpha(r-T).
\]
The common-tip assumption permits to extend $\Psi$ continuously to
$r=0$ by setting
\[
    \Psi(\alpha,0):=\mathsf o
    \qquad\text{for every }\alpha\in Y.
\]
Hence $\Psi$ induces a map
\[
    \Psi:\mathcal C(Y)\longrightarrow X.
\]
The uniqueness assumption implies that this map is bijective.

We claim that it is a homeomorphism. Continuity follows from the
definition of the compact-open topology on the family of rays. To
prove continuity of the inverse, argue as in the proof of
\cite[Lemma~5.4]{OhtaJLMS}. Assume by contradiction that
$x_j\to x$ in $X\setminus\{\mathsf o\}$ while the corresponding rays
$\alpha_j$ do not converge to the ray $\alpha$ containing $x$.
By the assumed precompactness, after passing to a subsequence the
$\alpha_j$ converge on every bounded parameter interval to a maximal
transport ray $\tilde\alpha$. The continuity of $\tau$ and of the
geodesics implies that $\tilde\alpha$ passes through $x$. By the
uniqueness assumption, necessarily $\tilde\alpha=\alpha$, yielding a
contradiction. The same argument at the tip, together with
$\tau_V(\Psi(\alpha,r))=r-T$, gives continuity of the inverse there.
Thus $\Psi$ is a homeomorphism.

It remains to identify the measure. Let $W\subset Y$ be Borel and
$0<a<b<+\infty$. By \eqref{eq:cone-disintegration},
\begin{align*}
    \mm\bigl(\Psi(W\times[a,b])\bigr)
    &=
    \int_W\int_a^b r^{N-1}\,dr\,\nu(d\alpha)\\
    &=
    \nu(W)\int_a^b r^{N-1}\,dr.
\end{align*}
Since sets of this form generate the Borel $\sigma$-algebra of
$\mathcal C(Y)$, a monotone-class argument gives
\[
    \Psi_\sharp
    \bigl(r^{N-1}\,dr\otimes\nu\bigr)
    =
    \mm.
\]
This proves \eqref{eq:measure-cone-rigidity}.
\end{proof}

\begin{remark}\label{rem:MCP-maximal-ray-not-full-rigidity}
The maximal-lifetime condition $a_\alpha=-T$ alone does not force the
full cone density on the past portion of an $\mathsf{MCP}(0,N)$ ray.
For instance,
\[
    h(t):=
    \begin{cases}
        1, & -T<t\leq0,\\[0.2cm]
        \displaystyle
        \left(1+\frac{t}{T}\right)^{N-1},
        & t\geq0,
    \end{cases}
\]
satisfies the one-dimensional $\mathsf{MCP}(0,N)$ comparison
inequalities, is normalized by $h(0)=1$, has
\[
    \theta=\frac{1}{NT^{N-1}},
    \qquad
    a=-T,
\]
but it does not agree with the cone density on $(-T,0)$. Thus an
additional equality condition, such as
\eqref{eq:past-volume-rigidity-assumption}, is necessary in order to
obtain full measure-cone rigidity under $\mathsf{TMCP^e}(0,N)$ alone.
\end{remark}

\textbf{Conflict of interest statement.} The authors have no relevant financial or non-financial interests to disclose. The authors
have no Conflict of interest to declare that are relevant to the content of this article.
\smallskip

\textbf{Data Availability.} Data sharing is not applicable to this article as no new data were created or analyzed in this study.

\footnotesize{
\bibliography{literature.bib}
}
\end{document}